\newcommand{\edittwo}[1]{}
\theoremstyle{plain}
\newtheorem{theorem}[equation]{Theorem}
\newtheorem{corollary}[equation]{Corollary}
\newtheorem{proposition}[equation]{Proposition}
\newtheorem{lemma}[equation]{Lemma}
\theoremstyle{definition}
\newtheorem{definition}[equation]{Definition}
\newtheorem{remark}[equation]{Remark}
\newtheorem{exam}[equation]{Example}
\numberwithin{equation}{section}
\newcommand{\bx}{\mathbf{x}}
\newcommand{\by}{\mathbf{y}}
\newcommand\F{\mathbb{F}}
\newcommand\R{\mathbb{R}}
\newcommand\C{\mathbb{C}}
\newcommand\HH{\mathbb{H}}
\newcommand\G{\mathscr{G}}
\newcommand\norm{\mathcal{N}}
\DeclareMathOperator{\diam}{diam}
\begin{document}
\title{The non-compact normed space of norms on a finite-dimensional
Banach space}

\author{Apoorva Khare}
\address{Indian Institute of Science;
Analysis and Probability Research Group; Bangalore 560012, India}
\email{\tt khare@iisc.ac.in}

\keywords{Norms, metric space, diameter seminorm, distortion,
Banach--Mazur compactum, Gromov--Hausdorff distance}

\begin{abstract}
We discuss a new pseudometric on the space of all norms on a
finite-dimensional vector space (or free module) $\mathbb{F}^k$, with
$\mathbb{F}$ the real, complex, or quaternion numbers. This metric arises
from the Lipschitz-equivalence of all norms on $\mathbb{F}^k$, and seems
to be unexplored in the literature. We initiate the study of the
associated quotient metric space, and show that it is complete,
connected, and non-compact. In particular, the new topology is strictly
coarser than that of the Banach--Mazur compactum.
For example, for each $k \geqslant 2$ the metric subspace $\{ \| \cdot
\|_p : p \in [1,\infty] \}$ maps isometrically and monotonically to $[0,
\log k]$ (or $[0,1]$ by scaling the norm), again unlike in the
Banach--Mazur compactum.

Our analysis goes through embedding the above quotient space into a
normed space, and reveals an implicit functorial construction of function
spaces with diameter norms (as well as a variant of the distortion). In
particular, we realize the above quotient space of norms as a normed
space.

We next study the parallel setting of the -- also hitherto unexplored --
metric space $\mathcal{S}([n])$ of all metrics on a finite set of $n$
elements, revealing the connection between log-distortion and diameter
norms. In particular, we show that $\mathcal{S}([n])$ is also a normed
space. We demonstrate embeddings of equivalence classes of finite metric
spaces (parallel to the Gromov--Hausdorff setting), as well as of
$\mathcal{S}([n-1])$, into $\mathcal{S}([n])$. We conclude by discussing
extensions to norms on an arbitrary Banach space and to discrete metrics
on any set, as well as some questions in both settings above.
\end{abstract}

\date{\today}
\maketitle

\settocdepth{section}
\tableofcontents

\section{The metric space of norms: definition and main result}

It is a folklore result that all norms on a finite-dimensional (real or
complex) normed linear space are topologically equivalent -- i.e.,
Lipschitz --  with respect to one another. The space of norms has long
been studied using the Banach--Mazur pseudometric.
Our goal in this work is to explain a new, strictly coarser topology on
the space of norms on $\R^k$ -- the equivalence classes are now given by
dilations -- which leads us to a \textit{non-compact} quotient metric
space $\mathcal{S}_k(\R)$. The Banach--Mazur continuum turns out to be a
(compact) quotient of this space; we will see for instance that the two
topologies agree on the sets of $p$-norms for $p \in [1,2]$ and
$[2,\infty]$, but not for $p \in [1,\infty]$.

We then study the space $\mathcal{S}_k(\R)$ by working in a broader
context of function spaces with diameter norms. As we explain below,
(a)~this function space construction is functorial and applies in a
special case to the setting of $\mathcal{S}_k(\R)$;
(b)~we deduce that the metric on $\mathcal{S}_k(\R)$ is in fact a norm;
and
(c)~we interpret this new metric/norm through a variant of the distortion
between metric spaces.
(d)~We also apply this functorial framework to deduce similar structural
properties of the metric space of all metrics on each finite set (see
Section~\ref{Sdistortion}), and of families of norms on an arbitrary
Banach space.
Hence the present paper, as we were surprisingly unable to find these
results recorded in the literature.\medskip

We begin by setting notation. Fix an integer $k>0$ and a Clifford algebra
$\F$ over $\R$ that is a division ring (equivalently, $\F$ lacks
zerodivisors), that is, $\F = \R, \C,$ or $\HH$.
We will denote $\dim_\R \F$ by $d$; also let $1,i$ (and $j,k$) denote the
standard $\R$-basis elements in $\C$ (or $\HH$).
Recall the conjugation operation $\alpha \mapsto \alpha^*$ in $\F$, which
is the unique $\R$-linear anti-involution that fixes $1$ and acts as
multiplication by $-1$ on $\{ i,j,k \} \cap \F$.
Now a \textit{norm} on $\F^k$ is a function $N : \F^k \to \R$ satisfying
the following properties for all $\bx, \by \in \F^k$ and $\alpha \in \F$:
\begin{enumerate}
\item \textit{Positivity:} $N(\bx) \geqslant 0$, with equality if and
only if $\bx = 0$.

\item \textit{Homogeneity:} $N(\alpha \bx) = |\alpha| N(\bx)$, where
$|\alpha| \coloneqq \sqrt{\alpha \alpha^*}$ will be termed the
\textit{absolute value} of $\alpha \in \F$ (to distinguish it from the
norm). Recall $|\cdot|$ is multiplicative on $\F$.

\item \textit{Sub-additivity:} $N(\bx+\by) \leqslant N(\bx) + N(\by)$.
\end{enumerate}

Denote the space of all norms on $\F^k$ by $\norm(\F^k)$. Here are some
basic properties of this space, some of which are used below.

\begin{lemma}\label{Lbasic}
For $\F = \R, \C,$ or $\HH,$ and an integer $k>0$, the space
$\norm(\F^k)$ is closed under the following operations:
\begin{itemize}
\item Addition.

\item Multiplication by $\R^{>0}$. (Thus, $\norm(\F^k)$ is a convex
cone.)

\item Pointwise limits, as long as the limiting function is positive
except at ${\bf 0}$.

\item Pre-composing by continuous additive maps $A : \F^k \to \F^k$ with
trivial kernel -- equivalently, real-linear maps $A \in GL_{dk}(\R)$
under some identification of $\F^k$ with $\R^{dk}$. In other words,
\[
A \in GL_{dk}(\R), \ N \in \norm(\F^k) \quad \implies \quad (\bx \mapsto
N(A \bx)) \in \norm(\F^k).
\]
\end{itemize}
\end{lemma}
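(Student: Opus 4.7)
The plan is to verify the three defining axioms of a norm (positivity, absolute homogeneity, and sub-additivity) under each of the four listed operations. All four verifications are direct, so I would simply go down the list.

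For the first two bullets (addition and multiplication by positive scalars), I would fix $N, N_1, N_2 \in \norm(\F^k)$ and $c \in \R^{>0}$, and observe that each axiom is stable under forming $N_1 + N_2$ and $cN$: non-negativity of each summand gives non-negativity of the sum, and vanishing forces both summands to vanish (hence $\bx = 0$); homogeneity follows because $|\alpha|$ factors out of every term; sub-additivity is immediate by adding the two triangle inequalities. The convex cone statement is then just the combination of these two.

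For the third bullet, suppose $N_n \to N$ pointwise with $N(\bx) > 0$ whenever $\bx \neq 0$. Positivity is the hypothesis, together with $N(\mathbf{0}) = \lim N_n(\mathbf{0}) = 0$. Homogeneity and sub-additivity follow by passing to the pointwise limit in the identities $N_n(\alpha \bx) = |\alpha| N_n(\bx)$ and $N_n(\bx + \by) \leqslant N_n(\bx) + N_n(\by)$; weak inequalities are preserved under pointwise limits.

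For the fourth bullet I would first dispose of the claimed equivalence: a continuous additive map $A : \F^k \to \F^k$ is automatically $\mathbb{Q}$-linear, and continuity plus density of $\mathbb{Q}$ in $\R$ upgrades this to $\R$-linearity; a trivial kernel on a finite-dimensional $\R$-vector space then forces $A \in GL_{dk}(\R)$, and the reverse implication is trivial. Given such an $A$, define $N_A(\bx) \coloneqq N(A\bx)$ and check the three axioms: positivity follows since $A\bx = 0 \iff \bx = 0$, and sub-additivity is immediate from $\R$-linearity of $A$ applied inside the triangle inequality for $N$. The step warranting the most care is homogeneity, which reduces to the identity $N(A(\alpha \bx)) = |\alpha| N(A \bx)$ for $\alpha \in \F$. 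For $\alpha \in \R$ this follows directly from $\R$-linearity of $A$ and homogeneity of $N$; for $\F \in \{\C, \HH\}$ the non-real case is the delicate point, since a general $A \in GL_{dk}(\R)$ need not commute with left-multiplication by $\alpha \in \F \setminus \R$. I would resolve this by examining whether the statement is intended under the understanding that $A$ commutes with the $\F$-action (equivalently, that $A$ is $\F$-linear), which is the only way the proof closes cleanly; otherwise one must restrict the fourth bullet to that subclass, which is in fact the case of interest for the metric constructions that follow.
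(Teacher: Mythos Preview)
The paper states this lemma without proof, treating it as a list of basic properties. Your verifications of the first three bullets are correct and complete, and would serve as the missing proof.

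On the fourth bullet you are not merely encountering a difficulty in the proof: you have uncovered a genuine error in the statement for $\F \in \{\C,\HH\}$. A clean counterexample is $\F=\C$, $k=1$, $N=|\cdot|$, and $A(x+iy)=x+2iy \in GL_2(\R)$; then $N_A(i)=|2i|=2$ while $|i|\,N_A(1)=1$, so $\F$-homogeneity fails. Thus the bullet is correct as stated only for $\F=\R$; for general $\F$ one must require $A$ to be $\F$-linear (equivalently, to commute with left multiplication by $\F$), exactly as you suspected. You should not hedge on this point --- the lemma is simply false as written for $\F\neq\R$, and the paper's later remark that $PGL_{dk}(\R)$ acts by isometries on $\mathcal{S}_k(\F)$ inherits the same defect (the correct group is $PGL_k(\F)$, or more precisely the image in $PGL_{dk}(\R)$ of the $\F$-linear automorphisms).
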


Notice that there are also other ways to construct norms, e.g. adding
norms on subspaces of $\F^k$ to a given norm in $\norm(\F^k)$. See
Equation~\eqref{Eaxes} below for an example.

The next result is standard for $\F = \R$, and easily extends to $\C$ or
$\HH$.

\begin{lemma}\label{Lequiv}
All norms in $\norm(\F^k)$ are Lipschitz-equivalent, i.e., for any two
norms $N,N' \in \norm(\F^k)$ there exist constants $0 < m \leqslant M$
such that
\begin{equation}\label{Elip}
m \cdot N(\bx) \leqslant N'(\bx) \leqslant M \cdot N(\bx), \qquad \forall
\bx \in \F^k.
\end{equation}
\end{lemma}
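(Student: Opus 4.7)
The plan is to reduce to the case $\F = \R$ by restriction of scalars, and then to run the classical compactness argument. First I would fix an $\R$-linear identification $\F^k \cong \R^{dk}$ and observe that any $N \in \norm(\F^k)$ restricts to a norm on the underlying real vector space $\R^{dk}$: positivity and sub-additivity transfer verbatim, while homogeneity over $\R \subset \F$ is a special case of the $\F$-homogeneity axiom. Thus it suffices to prove the lemma for ordinary real norms on $\R^{dk}$.

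For the real case, I would fix the Euclidean norm $N_0$ as a reference and show that $m N_0 \leqslant N \leqslant M N_0$ for some $0 < m \leqslant M$. First, expanding $\bx = \sum_i x_i \epsilon_i$ in the standard $\R$-basis $\{\epsilon_i\}$ of $\R^{dk}$, sub-additivity and $\R^{>0}$-homogeneity give $N(\bx) \leqslant \sum_i |x_i| N(\epsilon_i) \leqslant C \cdot N_0(\bx)$ with $C := \sqrt{\sum_i N(\epsilon_i)^2}$ by Cauchy--Schwarz. The reverse triangle inequality $|N(\bx) - N(\by)| \leqslant N(\bx - \by) \leqslant C \cdot N_0(\bx - \by)$ then shows $N$ is continuous in the $N_0$-topology. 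Next, the unit sphere $S := \{\bx : N_0(\bx) = 1\}$ is compact by Heine--Borel, and $N|_S$ is continuous and strictly positive, hence attains a minimum $m > 0$ and a maximum $M$ on $S$; $\R^{>0}$-homogeneity then upgrades this to $m N_0(\bx) \leqslant N(\bx) \leqslant M N_0(\bx)$ on all of $\F^k$. Finally, given two norms $N, N' \in \norm(\F^k)$, I would apply the previous step twice to obtain $m_N N_0 \leqslant N \leqslant M_N N_0$ and $m_{N'} N_0 \leqslant N' \leqslant M_{N'} N_0$; combining these yields \eqref{Elip} with $m = m_{N'}/M_N$ and $M = M_{N'}/m_N$.

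There is no real obstacle here; the only subtlety worth flagging is the initial scalar-restriction step, which is what allows the Heine--Borel/extreme-value argument (native to finite-dimensional real vector spaces) to apply uniformly across the cases $\F = \R, \C, \HH$. Everything else is a routine execution of the standard continuity-plus-compactness template for norms on $\R^n$.
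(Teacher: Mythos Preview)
Your proposal is correct and follows essentially the same approach as the paper: reduce to $\F = \R$ via restriction of scalars, bound an arbitrary norm above by a fixed reference norm, then invoke compactness of the unit ball boundary to obtain the two-sided estimate. The only cosmetic differences are that the paper uses the sup-norm and the cube boundary $\partial[-1,1]^{dk}$ where you use the Euclidean norm and the sphere $S^{dk-1}$, and the paper phrases the final step slightly more abstractly (continuous positive functions on a compact set are pairwise Lipschitz-equivalent) rather than via explicit min/max; neither difference is substantive.
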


For completeness we include a proof-sketch, in a slightly more general
setting that is relevant to the present work (below).

\begin{proof}
This follows from two observations -- (i)~$\F^k$ is a finite-dimensional
vector space over $\R$, and (ii)~$\norm(\F^k) \subset \norm(\R^{dk})$
under the $\R$-linear homeomorphism $\F \cong \R^d$, where $d = \dim_\R
\F$. These observations reduce the situation to the well-known case of
$\F = \R$, where we remind that the result again follows from two
observations:
(a)~Every norm is bounded above by a positive multiple of the sup-norm;
one obtains this by working on the boundary $\partial C$ of the cube $C =
[-1,1]^d$.
(b)~Given a compact metric space $(X,d)$ (such as $X = \partial C$), all
continuous maps in $C(X, (0,\infty))$ are pairwise `Lipschitz
equivalent'. A more general statement is that given any set $X$, all set
maps $: X \to (0,\infty)$ with image bounded away from $0$ and $\infty$
are pairwise `Lipschitz equivalent'.
\end{proof}

The preceding result is well-known. A less well-known result (which we
were unable to find in the literature) is the following construction,
which was mentioned by V.G.~Drinfeld in a lecture at the University of
Chicago in the early 2000s:

\begin{proposition}\label{Pdrinfeld}
Say that two norms $N,N'$ on $\F^k$ are {\em equivalent}, written $N \sim
N'$, if $N' \equiv \alpha N$ for some positive real number $\alpha$. Then
the space $\mathcal{S}_k(\R) \coloneqq \norm(\R^k) / \sim$ is a metric
space, with metric
\begin{equation}\label{Emetric}
d_{\mathcal{S}_k(\R)}([N], [N']) \coloneqq \log (M_{N,N'} / m_{N,N'}),
\end{equation}

\noindent where $M = M_{N,N'}$, $m = m_{N,N'}$ denote the largest and
smallest Lipschitz constants respectively, in Equation \eqref{Elip}.
\end{proposition}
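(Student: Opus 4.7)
The plan is to check the four axioms of a metric in turn, after first verifying that the proposed formula descends to equivalence classes. Throughout, for any two norms $N, N' \in \norm(\R^k)$, I will use Lemma~\ref{Lequiv} to write
\[
M_{N,N'} = \sup_{\bx \neq 0} \frac{N'(\bx)}{N(\bx)}, \qquad m_{N,N'} = \inf_{\bx \neq 0} \frac{N'(\bx)}{N(\bx)},
\]
so that Lemma~\ref{Lequiv} guarantees $0 < m_{N,N'} \leqslant M_{N,N'} < \infty$, and in particular $\log(M_{N,N'}/m_{N,N'}) \in [0,\infty)$.

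First I would verify that $d_{\mathcal{S}_k(\R)}$ is independent of the chosen representatives. Replacing $N$ by $\alpha N$ and $N'$ by $\beta N'$ for $\alpha,\beta > 0$ multiplies each pointwise ratio $N'(\bx)/N(\bx)$ by $\beta/\alpha$, so both $M$ and $m$ scale by $\beta/\alpha$, and their ratio is unchanged. Next, symmetry follows because swapping $N$ and $N'$ replaces each ratio by its reciprocal; hence $M_{N',N} = 1/m_{N,N'}$ and $m_{N',N} = 1/M_{N,N'}$, and the quotient $M/m$ is the same. For nondegeneracy, $d_{\mathcal{S}_k(\R)}([N],[N']) = 0$ forces $M_{N,N'} = m_{N,N'}$, i.e.\ the function $\bx \mapsto N'(\bx)/N(\bx)$ is a positive constant $c$ on $\F^k \setminus \{0\}$; then $N' = cN$, so $[N]=[N']$.

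The key step is the triangle inequality. Given three norms $N_1,N_2,N_3$, one simply exploits the pointwise factorization
\[
\frac{N_3(\bx)}{N_1(\bx)} = \frac{N_3(\bx)}{N_2(\bx)} \cdot \frac{N_2(\bx)}{N_1(\bx)}, \qquad \bx \neq 0.
\]
Taking suprema and infima and using standard inequalities $\sup(fg) \leqslant \sup f \cdot \sup g$ and $\inf(fg) \geqslant \inf f \cdot \inf g$ for positive functions, we obtain $M_{1,3} \leqslant M_{1,2} M_{2,3}$ and $m_{1,3} \geqslant m_{1,2} m_{2,3}$. Dividing and taking logarithms yields
\[
\log\frac{M_{1,3}}{m_{1,3}} \leqslant \log\frac{M_{1,2}}{m_{1,2}} + \log\frac{M_{2,3}}{m_{2,3}},
\]
which is precisely the triangle inequality for $d_{\mathcal{S}_k(\R)}$.

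None of these steps presents a serious obstacle; the only subtlety worth flagging is that the definition needs $m_{N,N'} > 0$ and $M_{N,N'} < \infty$ so that the logarithm makes sense and the sup/inf multiplicativity arguments are legitimate, both of which are guaranteed by Lemma~\ref{Lequiv}. The rest is bookkeeping.
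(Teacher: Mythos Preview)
Your proof is correct and is precisely the straightforward verification the paper alludes to when it says ``Once formulated, the result is shown in a straightforward manner'' without spelling out the details. Each axiom check is handled exactly as expected, and your only nontrivial step---the multiplicative sup/inf estimate for the triangle inequality---is the standard one.
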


Once formulated, the result is shown in a straightforward manner.
Informally, `the space of metrics forms a metric space'. The reader may
also recognize~\eqref{Emetric} as a variant of the
\textit{log-distortion} between metrics; see Section~\ref{Sdistortion}
for more on this.

\begin{remark}
We now record the connection between the space $\mathcal{S}_k(\F)$ and
the Banach--Mazur compactum (see e.g.~\cite{Pi}), where two
$k$-dimensional Banach spaces $U,V$ over $\F = \R$ or $\C$ have distance
\[
\log \inf \{ \| T \| \cdot \| T^{-1} \| : T \in GL(U,V) \}.
\]
Now if two norms are proportional, and thus represent the same point in
$\mathcal{S}_k(\R)$, then they also do the same in the Banach--Mazur
compactum: just note that $\| T \| \cdot \| T^{-1} \| = 1$ for $T$ the
identity map on $\R^k$. It follows that the Banach--Mazur compactum is a
quotient of $\mathcal{S}_k(\R)$. One consequence of our main result
(Theorem~\ref{Tmain} below) is that the topology in $\mathcal{S}_k(\R)$
is strictly coarser.
\end{remark}

The space $\mathcal{S}_k(\F)$ does not seem to be known to experts, nor
is it defined or analyzed in the literature; we initiate its study in the
present work. In light of the preceding remark, we hope that subsequent,
continued analysis of $\mathcal{S}_k(\F)$ will also yield additional
information about the Banach--Mazur compactum.

We begin with an immediate consequence of the above observation that
$\norm(\F^k) \subset \norm(\R^{dk})$: in a sense, it suffices to work
with $\F = \R$ (as we do below):

\begin{corollary}\label{C18}
The space $\mathcal{S}_k(\F) \coloneqq \norm(\F^k) / \sim$ is a closed
metric subspace of $\mathcal{S}_{dk}(\R)$, with common metric given
by~\eqref{Emetric}.
\end{corollary}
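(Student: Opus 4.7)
The plan is to view the identification $\F^k \cong \R^{dk}$ (via the standard $\R$-basis of $\F$) and unpack what the inclusion $\norm(\F^k) \subseteq \norm(\R^{dk})$ gives for free, then argue separately that the image in the quotient is closed. First I would observe that every $N \in \norm(\F^k)$ is an $\R$-norm on $\R^{dk}$: positivity and sub-additivity are axiomatic, and the $\F$-homogeneity axiom specializes to $\R$-homogeneity when $\alpha \in \R \subset \F$. Next, the equivalence relation $\sim$ from Proposition~\ref{Pdrinfeld} is in both settings the relation of positive real proportionality, so the inclusion descends to a well-defined injection $\mathcal{S}_k(\F) \hookrightarrow \mathcal{S}_{dk}(\R)$. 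The tightest Lipschitz constants $m_{N,N'}$ and $M_{N,N'}$ in~\eqref{Elip} are extrema over $\bx \in \F^k = \R^{dk}$ and so do not depend on which ambient space we regard $N,N'$ as sitting in; hence the induced map is isometric for the common metric~\eqref{Emetric}.

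It remains to prove that the image is closed in $\mathcal{S}_{dk}(\R)$. Suppose $[N_n] \to [N']$ in $\mathcal{S}_{dk}(\R)$, with each $N_n \in \norm(\F^k)$ and $N' \in \norm(\R^{dk})$. Choose specific representatives (abusing notation) normalized by $N_n(e_1) = N'(e_1) = 1$, where $e_1$ is the first standard basis vector of $\F^k$. By~\eqref{Emetric} the tightest Lipschitz constants $m_n, M_n$ between $N_n$ and $N'$ satisfy $m_n \leqslant 1 \leqslant M_n$ and $\log(M_n/m_n) \to 0$, which forces $m_n, M_n \to 1$. Therefore $N_n \to N'$ uniformly on $N'$-bounded sets, and in particular pointwise on $\F^k$.

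Since pointwise limits preserve the $\F$-homogeneity identity $N(\alpha \bx) = |\alpha| N(\bx)$ for each fixed $\alpha \in \F$ and $\bx \in \F^k$, the limit $N'$ is $\F$-homogeneous; positivity of $N'$ away from $0$ holds by hypothesis (it is already a real norm). Thus $N' \in \norm(\F^k)$ by Lemma~\ref{Lbasic}, so $[N']$ lies in the image of $\mathcal{S}_k(\F)$, proving closedness. The only potential subtlety — and I think it is the only step worth checking carefully — is the normalization at $e_1$: one must verify that the resulting sequence $N_n$ is the same equivalence class as before and that the chosen $N'$ is a legitimate representative; this is immediate because $N_n(e_1), N'(e_1) > 0$ by the positivity axiom, so dividing by these scalars does not alter the classes $[N_n], [N']$.
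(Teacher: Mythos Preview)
Your proof is correct and follows essentially the same approach as the paper: reduce closedness to showing the limiting real norm is $\F$-homogeneous by rescaling representatives to force pointwise convergence, then pass the identity $N_n(\alpha\bx)=|\alpha|N_n(\bx)$ to the limit. The only cosmetic difference is the normalization: the paper rescales each $N_l$ so that $N_l/N \geqslant 1$ with supremum tending to $1$, whereas you pin all norms at $e_1$; both choices yield $m_n,M_n\to 1$ and hence pointwise convergence, so the arguments are interchangeable.
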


\begin{proof}
We show that $\mathcal{S}_k(\F)$ is closed in $\mathcal{S}_{dk}(\R)$.
Suppose $[N_l] \to [N]$ in $\mathcal{S}_{dk}(\R)$, with $N_l \in
\norm(\F^k)\ \forall l$ and $N \in \norm(\R^{dk})$. Without loss of
generality, rescale the $N_l$ and assume via \eqref{Elip} that
\[
\frac{N_l}{N} : \F^k \setminus \{ {\bf 0} \} \to [1,M_l], \qquad \forall
l>0
\]

\noindent with $M_l \to 1$ as $l \to \infty$. But then, $N_l \to N$
pointwise on $\F^k$. In particular, given $\alpha \in \F$ and nonzero
$\bx \in \F^k$,
\[
\frac{N(\alpha \bx)}{N(\bx)} = \lim_{l \to \infty} \frac{N_l(\alpha
\bx)}{N_l(\bx)} = |\alpha|,
\]
and from this it follows that $N \in \norm(\F^k)$ as desired.
\end{proof}

We now state the main result of the present work (with the caveat that
this result is placed in a more general, functorial framework introduced
in the following section). It implies as a consequence that
$d_{\mathcal{S}_k(\F)}$ is not just a metric, but also a norm:

\begin{theorem}\label{Tmain}
For $\F = \R, \C, \HH$, the space $\mathcal{S}_k(\F)$ is a complete,
path-connected metric subspace of a real Banach space. It is a singleton
set for $k=1$, and unbounded for $k>1$.
\end{theorem}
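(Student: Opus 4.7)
The plan is to dispatch the extremal and topological claims by direct computation, and to obtain the Banach-subspace statement via a logarithmic embedding into a function space with the diameter norm (the construction the paper signals will be built functorially in the next section). For $k=1$, homogeneity forces every $N \in \norm(\F)$ to equal $N(1)\cdot |\cdot|$, so the quotient is a single point. For $k>1$, unboundedness follows by exhibiting the family of weighted sup-norms $N_t(\bx) \coloneqq \max\{t|x_1|, |x_2|, \ldots, |x_k|\}$ with $t \geqslant 1$; these lie in $\norm(\F^k)$ by Lemma~\ref{Lbasic} (or a direct check), and comparing $N_t$ to $N_1$ at $e_1$ and $e_2$ pins down Lipschitz constants $M = t$, $m = 1$, hence $d_{\mathcal{S}_k(\F)}([N_t],[N_1]) = \log t \to \infty$.

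For completeness, I would take a Cauchy sequence $\{[N_l]\}$, fix any nonzero $\bx_0 \in \F^k$, and rescale each representative so that $N_l(\bx_0)=1$. The Cauchy condition then forces the ratios $N_l/N_m$ to lie in $[e^{-\varepsilon_{l,m}}, e^{\varepsilon_{l,m}}]$ on all of $\F^k \setminus \{\mathbf{0}\}$ with $\varepsilon_{l,m} \to 0$, since the normalization at $\bx_0$ removes the multiplicative ambiguity. In particular $\{\log N_l(\bx)\}$ is Cauchy at every $\bx \neq \mathbf{0}$, so $N_l \to N$ pointwise for some strictly positive function $N$. Homogeneity and subadditivity pass to the pointwise limit, so by Lemma~\ref{Lbasic} the limit $N$ lies in $\norm(\F^k)$; the same estimates give $d_{\mathcal{S}_k(\F)}([N_l],[N]) \to 0$.

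For path-connectedness, I would use the convex combination $N_t \coloneqq (1-t)N_0 + t N_1$, which is a norm for $t \in [0,1]$ by the convex-cone property of Lemma~\ref{Lbasic}. Continuity of $t \mapsto [N_t]$ reduces to showing $N_s/N_t \to 1$ uniformly on $\F^k \setminus \{\mathbf{0}\}$ as $s \to t$; using $N_s - N_t = (s-t)(N_1 - N_0)$ and the two-sided Lipschitz equivalence $m N_0 \leqslant N_1 \leqslant M N_0$ from Lemma~\ref{Lequiv}, the ratio $(N_1-N_0)/N_t$ is uniformly bounded, so the desired uniform convergence is elementary.

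The main obstacle, and where I expect the functorial framework of the next section to take over, is realizing $\mathcal{S}_k(\F)$ isometrically inside a real Banach space. The approach I would pursue is the logarithmic embedding: fix a reference norm $N_0$ and send $[N] \mapsto \log(N/N_0)$, regarded as a continuous function on the $N_0$-unit sphere $S_{N_0}$ (which is compact by Lemma~\ref{Lequiv}) modulo additive constants. On the quotient $C(S_{N_0})/\R\cdot \mathbf{1}$ the diameter seminorm $\|f\| \coloneqq \sup f - \inf f$ descends to a genuine norm and makes the quotient a Banach space, and one checks $\|\log(N/N_0) - \log(N'/N_0)\| = \log M_{N,N'} - \log m_{N,N'} = d_{\mathcal{S}_k(\F)}([N],[N'])$. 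The delicate point -- and the reason the paper postpones the construction -- is ensuring that the resulting Banach-space structure on the target is natural, i.e.\ independent of the auxiliary choice of $N_0$ or of sphere; this naturality is exactly what the diameter-norm formalism is designed to provide.
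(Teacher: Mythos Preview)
Your proposal is correct and follows essentially the same route as the paper: the core step is the logarithmic embedding $[N] \mapsto [\log N|_X]$ into $C(X,\R)$ modulo constants equipped with the diameter norm, where $X$ is a compact set meeting every ray (you take the $N_0$-unit sphere; the paper allows any compact $X$ satisfying~\eqref{Echoose}, with the sphere as the canonical example). Your verification that this is an isometry is exactly the paper's.

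A few minor differences in packaging are worth noting. For completeness, the paper argues that the image of $\mathcal{S}_k(\F)$ is \emph{closed} in the Banach space $C(X,\R)/\sim$ and then inherits completeness; you instead prove completeness directly by normalizing at a basepoint and passing to a pointwise limit. The underlying computation (constructing the limit norm and checking it is a norm via Lemma~\ref{Lbasic}) is the same in both cases. For unboundedness, the paper invokes the family $N_{p,q,j}(\bx) = \|\bx\|_p + q|x_j|$ of Proposition~\ref{Pskp}; your weighted sup-norms work equally well and are arguably simpler. For path-connectedness, the paper says only that $\norm(\F^k)$ is convex and hence so is the quotient; your explicit estimate showing $t \mapsto [N_t]$ is continuous in the quotient metric actually fills in a detail the paper glosses over.

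One small correction of emphasis: the reason the paper develops the diameter-norm framework in Section~\ref{S2} is not primarily to show independence of the choice of $N_0$ or of sphere (the paper simply fixes an $X$ and proceeds; Proposition~\ref{Porigin} later explores how the embedding varies with $X$), but rather to establish once and for all that $C_b(X,\R)/\sim$ is a Banach space and that the construction is functorial, so that the same machinery applies verbatim to the metric-space setting of Section~\ref{Sdistortion}.
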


\noindent (In less formal terms: `the space of norms lies in a normed
linear space.') A second consequence is that in dimensions two and
higher, the space of equivalence classes of norms is not compact.

\subsection*{Acknowledgments}

This work is partially supported by Ramanujan Fellowship
SB/S2/RJN-121/2017 and MATRICS grant MTR/2017/000295 from SERB (Govt.~of
India), by grant F.510/25/CAS-II/2018(SAP-I) from UGC (Govt.~of India),
and by a Young Investigator Award from the Infosys Foundation. I thank
Terence Tao for valuable discussions -- especially about the final
section -- as well as Gautam Bharali, Javier Cabello S\'anchez, Hariharan
Narayanan, and M. Amin Sofi for several useful comments that helped
improve previous versions of this manuscript. Part of this work was
carried out during a visit to UCLA, to which I am thankful for its
hospitality.

\section{Diameter norms and an endofunctor}\label{S2}

The goal of this section and the next is (to proceed toward) proving
Theorem~\ref{Tmain}. While it is possible to provide a direct proof, our
construction of a family of Banach spaces that each encompass
$\mathcal{S}_k(\F)$ (as asserted in Theorem~\ref{Tmain}) turns out to be
part of a broader functorial setting -- which we will use below in more
than one setting. Thus we explain this setting in the present section,
and complete the proof in Section~\ref{S3}.

The most primitive framework we consider is that of an abelian
topological semigroup $(\G,+,d_\G)$ with an associative, commutative
binary operation $+ : \G \times \G \to \G$ and a translation-invariant
metric $d_\G$, i.e.,
\begin{equation}\label{Esemimetric}
d_\G(x+z,y+z) = d_\G(x,y), \qquad \forall x,y,z \in \G.
\end{equation}

Notice that in such a semigroup, one does not necessarily have inverses
(i.e., `negatives') or the identity element $e = 0_\G$. However, the
following is easily shown (see e.g.~\cite{KR} for details).
\begin{itemize}
\item the semigroup always has at most one idempotent $2e=e$;
\item if such an $e$ exists, it is the unique identity element in (the
monoid) $\G$;
\item if $\G$ does not contain an idempotent, one can formally attach
such an idempotent $e$ with metric $d_\G(e,z) \coloneqq d_\G(z,2z),
d_\G(e,e) = 0$, and this creates the unique smallest monoid (with
translation-invariant metric) containing $\G$.
\end{itemize}

Examples of such semigroups $\G$ abound in the literature, the most
prominent being Banach spaces. However, there are several `intermediate'
classes of such abelian semigroups, including
monoids (i.e.~`$\mathbb{N} \cup \{ 0 \}$-modules'),
groups (i.e.~$\mathbb{Z}$-modules),
torsion-free divisible groups (i.e.~$\mathbb{Q}$-modules),
and normed linear spaces (i.e.~$\R$-modules).
More generally, one can consider metric $R$-modules, where $R \subset \R$
is a unital subring. As a further variant, one has the subclass of
$R$-normed $R$-modules (see Definition~\ref{Dcat}).

Our first goal is to show that the seminorm defined in
Theorem~\ref{Tsemigroup} below endows all $\G$-valued function spaces
with the structure of a (pseudo-)metric. In fact, we show that this holds
on a more structural level. For instance, it is clear that if $\G$ is a
monoid, then so is the corresponding function space; and this holds for
the finer structures mentioned above as well. What we also show is that
the function space construction is also compatible with `good'
homomorphisms.

A systematic way to carry out this bookkeeping is that each of the above
classes of semigroups is in fact a category, and the function space
construction is a \textit{covariant endofunctor} for each of these
categories. This is now explained.

\begin{definition}\label{Dcat}
Let $R \subset \R$ denote any unital subring.
\begin{enumerate}
\item Let ${\tt Semi}$ denote the category of abelian topological
semigroups $(\G, +, d_\G)$ with translation-invariant metric $d_\G$, as
above, and whose morphisms are semigroup homomorphisms that are
Lipschitz.

\item Let ${\tt Mon}$ denote the full subcategory of ${\tt Semi}$, whose
objects are monoids.

\item Let $R$-${\tt Mod}$ denote the subcategory of ${\tt Semi}$, whose
objects are $R$-modules such that multiplication by scalars in $R$ are
Lipschitz maps, and whose morphisms are Lipschitz $R$-module maps.

\item Let $R$-${\tt NMod}$ denote the full subcategory of $R$-${\tt
Mod}$, whose objects are \textit{$R$-normed} $R$-modules $\G$. In other
words, $d_\G(0, ra) = |r| d_\G(0,a)$ for all $r \in R$ and $a \in \G$.

\item Let $\overline{R}$-${\tt Mod} \subset R$-${\tt Mod}$ (note the
unconventional notation) denote the full subcategory of $R$-${\tt Mod}$
whose objects are complete metric spaces; and similarly define
$\overline{R}$-${\tt NMod} \subset R$-${\tt NMod}$.
\end{enumerate}
\end{definition}

For each of these classes of objects, we now prove that the following
function space construction is functorial:

\begin{theorem}\label{Tsemigroup}
Suppose $\mathscr{C}$ is one of the categories in Definition \ref{Dcat}
(i.e., ${\tt Semi}, {\tt Mon}, R$-${\tt Mod}, \dots, \overline{R}$-${\tt
NMod}$).
Given a set $X$ and an object $M \in \mathscr{C}$, let $F_b(X,M)$ denote
the set of bounded functions $f : X \to M$, and define
\begin{equation}\label{Esemigroup}
d(f,g) \coloneqq \sup_{x,x' \in X} d_M(f(x) + g(x'), f(x') + g(x)).
\end{equation}
For every set $X$ that is not a singleton:
\begin{enumerate}
\item $d$ is a pseudometric on $F_b(X,M)$.

\item If one defines $f \sim g$ to mean $d(f,g) = 0$, then $\sim$ is an
equivalence relation, and the quotient space assignment $M \mapsto
F_b(X,M) / \sim$ is a covariant isometric endofunctor of the category
$\mathscr{C}$.
\end{enumerate}
Here, we term a functor $F : \mathscr{C} \to \mathscr{C}'$ to be {\em
isometric} if:
(a)~all ${\rm Hom}$-spaces in $\mathscr{C}, \mathscr{C}'$ are metric
spaces, and
(b)~$F : {\rm Hom}(C_1,C_2) \to {\rm Hom}(F(C_1), F(C_2))$ is an isometry
for all objects $C_1, C_2 \in \mathscr{C}$.
\end{theorem}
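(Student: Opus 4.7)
The plan is to prove Theorem~\ref{Tsemigroup} in three stages: verify the pseudometric axioms for $d$ on $F_b(X,M)$, transport the categorical structure of $\mathscr{C}$ to the quotient $F_b(X,M)/\sim$, and then check functoriality together with the isometric claim on Hom-spaces. Essentially every step is driven by the translation invariance~\eqref{Esemimetric} together with commutativity of $+$; the genuinely substantive step is the triangle inequality for $d$.

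Symmetry of $d$ and the identity $d(f,f)=0$ are immediate (the latter uses commutativity of $+$), while finiteness of $d(f,g)$ for bounded $f,g$ follows from inserting $f(x')+g(x')$ between the two entries of $d_M$ and applying translation invariance twice, yielding
\[
d(f,g) \;\leqslant\; \diam f(X) + \diam g(X) \;<\; \infty.
\]
For the triangle inequality $d(f,h) \leqslant d(f,g)+d(g,h)$, my key move is to add $g(x)+g(x')$ to both entries of $d_M\bigl(f(x)+h(x'),\,f(x')+h(x)\bigr)$, which by translation invariance is free of charge, and then regroup by commutativity and insert the intermediate point $f(x')+g(x)+g(x)+h(x')$. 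Applying the triangle inequality in $M$ once and peeling off the common summand on each half via translation invariance separates the total into a term bounded by $d(f,g)$ and one bounded by $d(g,h)$; taking suprema yields the claim. This is the main obstacle---the only step where the abelian-plus-translation-invariant structure is used in a non-trivial way.

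Since $\sim$ is then an equivalence relation and $d$ descends to a genuine metric on the quotient, I would next transport each categorical structure. Pointwise addition makes $F_b(X,M)$ an abelian semigroup, and the inequality $d(f_1+f_2,\,g_1+g_2) \leqslant d(f_1,g_1)+d(f_2,g_2)$ (a minor variant of the triangle-inequality computation) simultaneously establishes translation invariance of $d$ and shows that $\sim$ respects $+$. For ${\tt Mon}$, the constant function at $0_M$ descends to an identity. For $R$-${\tt Mod}$, scalar multiplication is inherited pointwise and is Lipschitz with the same constant as on $M$; for $R$-${\tt NMod}$, the identity $d(rf,rg) = |r|\,d(f,g)$ passes summand-by-summand through each $d_M(\cdot,\cdot)$. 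Completeness in the $\overline{R}$-subcategories is handled by choosing pointwise-Cauchy representatives of any Cauchy sequence and taking the pointwise limit, using uniform boundedness to stay inside $F_b(X,M)$.

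For morphisms, given an $L$-Lipschitz morphism $\phi : M \to M'$ in $\mathscr{C}$, I would set $\phi_*(f) := \phi \circ f$. Because $\phi$ is a semigroup homomorphism,
\[
d(\phi_*f,\,\phi_*g) \;=\; \sup_{x,x'} d_{M'}\bigl(\phi(f(x)+g(x')),\,\phi(f(x')+g(x))\bigr) \;\leqslant\; L\cdot d(f,g),
\]
so $\phi_*$ descends to an $L$-Lipschitz morphism on the quotients, preserves all algebraic structure, and satisfies $(\phi\circ\psi)_* = \phi_*\circ\psi_*$ and $(\mathrm{id}_M)_* = \mathrm{id}$. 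For the isometric claim, the inequality $\mathrm{Lip}(\phi_*) \leqslant \mathrm{Lip}(\phi)$ is the computation just displayed; for the reverse, I would use the hypothesis that $X$ is not a singleton. Fix $x_0 \neq x_1$ in $X$ and any $c \in M$. Given $a \in M$ nearly realizing $\mathrm{Lip}(\phi)$ against $c$, define $f$ to equal $a$ at $x_0$ and $c$ elsewhere, and set $g \equiv c$. A direct computation---using translation invariance to cancel the common summand $c$---gives $d(f,g) = d_M(a,c)$ and $d(\phi_*f,\phi_*g) = d_{M'}(\phi(a),\phi(c))$, whose ratio supplies $\mathrm{Lip}(\phi)$ upon supping over $a,c$. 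Packaging this test-function construction uniformly across each listed category, so that the Hom-distance is recovered on the nose, is the only remaining bookkeeping.
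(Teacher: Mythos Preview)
Your argument tracks the paper's closely: the triangle inequality via translation-invariance plus an inserted intermediate point, the transfer of algebraic structure to the quotient, the normalization-at-a-basepoint idea for completeness, and the test-function construction for the lower bound are all the same moves the paper makes.

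The one place you are genuinely vague is the isometric claim itself. What you actually prove in your last paragraph is $\mathrm{Lip}(\phi_*) = \mathrm{Lip}(\phi)$ for a \emph{single} morphism $\phi$. That is not the statement that $\phi \mapsto \phi_*$ is an isometry of Hom-spaces; for the latter you need a metric on $\mathrm{Hom}_{\mathscr{C}}(M,N)$ and a comparison of $d(\eta_*,\varphi_*)$ with $d(\eta,\varphi)$ for \emph{pairs} of morphisms. In the $R$-module categories one can take $d(\eta,\varphi) := \mathrm{Lip}(\eta-\varphi)$ and your argument adapts directly, but in ${\tt Semi}$ and ${\tt Mon}$ there is no subtraction, so this is not available. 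The paper resolves this by introducing
\[
d(\eta,\varphi) \;:=\; \sup_{m\neq m'} \frac{d_N\bigl(\eta(m)+\varphi(m'),\,\varphi(m)+\eta(m')\bigr)}{d_M(m,m')},
\]
verifies it is an honest metric (the nontrivial point $d(\eta,\varphi)=0 \Rightarrow \eta=\varphi$ uses the doubling $m\mapsto 2m$, which is the one place the semigroup structure of the \emph{domain} enters), and then runs the two-sided estimate with $\eta$ and $\varphi$ simultaneously. Your test functions are exactly the ones the paper uses, and the computation goes through once you plug in two morphisms at once; but you should not dismiss this as bookkeeping, since identifying the correct Hom-metric and checking it separates points is the one step that is not automatic in the bare semigroup case.
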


\begin{remark}
As we show in Section~\ref{S3}, the connection to norms arises from the
fact that the space $\mathcal{S}_k(\F) \subset \mathcal{S}_{dk}(\R)$
embeds into $F_b(X,\R) / \sim$ for some compact subset $X \subset \F^k$.
Thus by Theorem~\ref{Tsemigroup} for $\mathscr{C} = \overline{\R}$-${\tt
NMod}$, the metric on the space of norms arises as a norm in a Banach
space.
\end{remark}

For more categorical consequences and ramifications related to
Theorem~\ref{Tsemigroup}, we refer the reader to e.g.~\cite{KR}. 
\edittwo{Also check here -- or already done in \cite{KR}? -- whether
or not Hom-spaces with values in $\mathscr{C}$-objects, are themselves
objects of $\mathscr{C}$?}
Also notice that if $X$ is a singleton then $F_b(X,M)
\simeq M$, whence $F_b(X,M) / \sim$ is the trivial semigroup (or Banach
space). In this case the above result is true, except perhaps for the
word `isometric'.

\begin{remark}\label{Rdiameter}
For general abelian metric semigroups $M \in \mathscr{C}$ as in
Theorem~\ref{Tsemigroup}, an example of functions $f,g \in F_b(X,M)$ with
distance zero is to choose and fix $m_0 \in M$, and take $g(x) \equiv m_0
+ f(x)$ on all of $X$. If $M$ is moreover a monoid and $f \equiv 0_M$,
then these are the only examples.

In the further special case when $M$ is an abelian metric group, the
functions $g \equiv m_0 + f$ turn out to be the only examples of
equivalent functions, for all $f \in F_b(X,M)$. Moreover, the
(pseudo-)metric defined above has a more accessible interpretation as a
\textit{diameter seminorm}:
\begin{equation}
d(f,g) = N(f-g), \quad \text{where} \quad N(f) \coloneqq \sup_{x,x' \in
X} d_M(f(x), f(x')) = {\rm diam} ({\rm im}(f)).
\end{equation}
In particular, the equivalence relation $f \sim g$ amounts to $f-g$ being
a constant function. For (abelian) monoids $(M, 0_M)$, essentially this
last assertion also follows if one restricts to the set of bounded
functions $f : X \to M$ satisfying: $0_M \in {\rm im}(f)$. More
precisely, if $d(f,g) = 0$ for such functions, there exists $m_0 \in M$
such that $-m_0 \in M$ and $g \equiv m_0 + f$ on $M$.
\end{remark}

\begin{proof}[Proof of Theorem \ref{Tsemigroup}]
We begin by showing that (1) holds for all abelian semigroups $M$, and
then turn to (2) for each successively smaller category. To show (1), we
will show the triangle inequality; note this also proves the transitivity
of $\sim$ and hence that $\sim$ is an equivalence relation on $F_b(X,M)$.
Given $x,y \in X$,
\begin{align}\label{Etriangle}
&\ d_M(f(x) + g(y), f(y) + g(x)) \notag\\
= &\ d_M(f(x) + g(y) + h(y), f(y) + g(x) + h(y)) \notag\\
\leqslant &\ d_M(f(x) + g(y) + h(y), f(y) + g(y) + h(x))\\
& \quad + d_M(f(y) + g(y) + h(x), f(y) + g(x) + h(y)) \notag\\
\leqslant &\ d(f,h) + d(h,g).\notag
\end{align}

As this inequality holds for all $x,y \in X$, the triangle inequality
follows.
This proves (1), and as a consequence, $F_b(X,M) / \sim$ is always a
metric space under the metric~\eqref{Esemigroup}. That this metric is
translation-invariant~\eqref{Esemimetric} is straightforward.\smallskip

We now claim that (2) holds, first for the category ${\tt Semi}$. Indeed,
one defines the (bounded) function $f+g$ pointwise for $f,g \in
F_b(X,M)$. We claim that if $f \sim f'$ and $g \sim g'$ (i.e., they
have distances zero between them) in $F_b(X,M)$, then $f+g \sim f'+g'$.
This follows because
\begin{align*}
&\ d_M((f+g)(x) + (f'+g')(y), (f+g)(y) + (f'+g')(x))\\
= &\ d_M([f(x)+f'(y)] + [g(x)+g'(y)], [f(y)+f'(x)] + [g(y)+g'(x)]) = 0,
\end{align*}
which in turn follows from the equalities: $f(x)+f'(y) = f(y)+f'(x)$ and
$g(x)+g'(y) = g(y)+g'(x)$, for all $x,y \in X$.

Thus, $+$ is well-defined on $F_b(X,M) / \sim$. Similarly, one verifies
that if $f_n \to f$ and $g_n \to g$ in $F_b(X,M) / \sim$, then $f_n + g_n
\to f+g$. Next, given a semigroup morphism $\varphi : M \to N$,
post-composing by $\varphi$ defines a map $\varphi \circ -$ of semigroups
$: F_b(X,M) \to F_b(X,N)$; and if $d(f,g) = 0$ in $F_b(X,M)$, then
$d(\varphi \circ f, \varphi \circ g) = 0$ in $F_b(X,N)$. Thus $\varphi$
induces a well-defined map
\begin{equation}\label{Emorphism}
[\varphi] : F_b(X,M) / \sim \ \ \to F_b(X,N) / \sim.
\end{equation}

Finally, we verify that the given functor induces isometries on ${\rm
Hom}$-spaces. The first sub-step is to claim that every ${\rm Hom}$-space
${\rm Hom}(M,N)$ in ${\tt Semi}$ is itself a semigroup, with
translation-invariant metric given by:
\[
d(\eta,\varphi) \coloneqq \sup_{m \neq m' \in M} \frac{d_N(\eta(m) +
\varphi(m'), \varphi(m) + \eta(m'))}{d_M(m,m')}.
\]
We only show that if $d(\eta,\varphi) = 0$ then $\eta = \varphi$; the
remainder of the claim is straightforward (with the triangle inequality
following similarly to \eqref{Etriangle}). Indeed, if $d(\eta,\varphi) =
0$, then:
\[
d(\eta,\varphi) = 0 \quad \implies \quad d_M(\eta(m) +  \varphi(2m), 
\eta(2m) + \varphi(m)) = 0\ \forall m \in M \quad \implies \quad \eta
\equiv \varphi.
\]

\noindent This proves the above claim.

We now show that the map $\varphi \mapsto [\varphi]$ is an isometry
\[
[-] : {\rm Hom}_{\mathscr{C}}(M,N) \to
{\rm Hom}_{\mathscr{C}}(F_b(X,M) / \sim, F_b(X,N) / \sim).
\]
Suppose $\eta, \varphi : M \to N$ are semigroup morphisms, and let
$d(\eta, \varphi) = L$. If $L=0$ then the preceding computation shows
$\eta \equiv \varphi$ and hence $[\eta] = [\varphi]$. Otherwise, we
compute from first principles:
\begin{align*}
d([\eta], [\varphi]) = &\ \sup_{[f] \neq [g] \in F_b(X,M) / \sim}
\frac{d(\eta \circ [f] + \varphi \circ [g], \varphi \circ [f] + \eta
\circ [g])}{d([f],[g])}\\
= &\ \sup_{[f] \neq [g] \in F_b(X,M) / \sim} \frac{1}{d([f],[g])}
\sup_{x,x' \in X} d_N(\eta(m) + \varphi(m'), \varphi(m) + \eta(m')),
\end{align*}
where $m \coloneqq f(x) + g(x')$ and $m' \coloneqq f(x') + g(x)$.
Now note that
\[
d_N(\eta(m) + \varphi(m'), \varphi(m) + \eta(m')) \leqslant d(\eta,
\varphi) d_M(f(x) + g(x'), f(x') + g(x)) \leqslant L \cdot d([f],[g])
\]
for all $x,x' \in X$. It follows that $d([\eta], [\varphi]) \leqslant L =
d(\eta,\varphi)$.

To show the reverse inequality, suppose $m_l \neq m'_l, l \in \mathbb{N}$
are sequences in $M$ such that the sequences
\[
d_l \coloneqq \frac{d_N(\eta(m_l) + \varphi(m'_l), \varphi(m_l) +
\eta(m'_l))}{d_M(m_l,m'_l)}
\]
are non-decreasing to $L = d(\eta,\varphi)$ as $l \to \infty$. We now use
that $X$ is not a singleton, whence for a fixed element $x_1 \in X$, we
consider
\[
f_l|_X \equiv m_l, \qquad g_l|_{X \setminus x_1} \equiv m_l, \qquad
g_l(x_1) \coloneqq m'_l, \qquad l \in \mathbb{N}.
\]
Clearly $d([f_l], [g_l]) = d_M(m_l,m'_l)$, whence for any $x_2 \in X
\setminus x_1$, we compute from above:
\begin{align*}
&\ d([\eta], [\varphi])\\
\geqslant &\ \sup_{l \in \mathbb{N}} \frac{d_N(\eta(f_l(x_1) + g_l(x_2))
+ \varphi(f_l(x_2) + g_l(x_1)), \eta(f_l(x_2) + g_l(x_1)) +
\varphi(f_l(x_1) + g_l(x_2)))}{d([f_l],[g_l])}\\
= &\ \sup_{l \in \mathbb{N}} \frac{d_N(\eta(m_l) + \varphi(m'_l),
\varphi(m_l) + \eta(m'_l))}{d_M(m_l, m'_l)}\\
= &\ \sup_{l \in \mathbb{N}} d_l = L = d(\eta,\varphi).
\end{align*}

This proves the theorem for metric semigroups, i.e.~for $\mathscr{C} = {\tt
Semi}$.
We next impose the additional structure in each smaller subcategory one
by one, and show the result for the remaining $\mathscr{C}$. Clearly, if
$M$ is a monoid, then so is $F_b(X,M) / \sim$, with identity $f \equiv
0_M$. Now the result is easily verified for $\mathscr{C} = {\tt Mon}$.
(Note that all morphisms are automatically monoid maps.)

Next suppose $\mathscr{C} = R$-${\tt Mod}$. One checks that if $f \sim g$
then $rf \sim rg$, where $rf \in F_b(X,M)$ is defined in the usual
(pointwise) fashion. Also, multiplication by $r$ is Lipschitz on
$F_b(X,M) / \sim$ if it is so on $M$ itself. Now the result is easily
verified in this setting. Finally, if $M$ is also $R$-normed, then one
checks that so is $F_b(X,M) / \sim$.

This shows the result for all categories except for
$\overline{R}$-${\tt Mod}, \overline{R}$-${\tt NMod}$.
For these latter cases, it suffices to show the claim that if $M$ is a
complete abelian metric group then so is $F_b(X,M) / \sim$. We begin by
isolating the main component of this argument into a standalone result
(together with some related preliminaries).

\begin{lemma}\label{Lmetric}
Fix a set $X$ and an abelian metric semigroup $(M,+,d_M)$.
\begin{enumerate}
\item $F_b(X,M)$ is a metric space under the sup-norm
\[
d_\infty(f,g) \coloneqq \sup_{x \in X} d_M(f(x), g(x)).
\]
Moreover, $F_b(X,M)$ is complete if and only if $M$ is complete.

\item The quotient map of metric spaces $: F_b(X,M) \to F_b(X, M) / \sim$
is Lipschitz of norm at most $2$.

\item If $M$ is moreover a group, there exists a section $\Phi_{x_0} :
F_b(X,M) / \sim \ \ \to F_b(X,M)$ which is a sub-contraction:
\begin{equation}\label{Esupdist}
d_\infty( \Phi_{x_0}([f]), \Phi_{x_0}([g]) ) \leqslant d([f], [g]),
\qquad \forall x_0 \in X, \ [f], [g] \in F_b(X,M),
\end{equation}
and such that the image of $\Phi_{x_0}$ is precisely the set of functions
vanishing at $x_0$.
\end{enumerate}
\end{lemma}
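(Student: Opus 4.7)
\medskip
\noindent\textbf{Proof plan for Lemma \ref{Lmetric}.}
For part (1), the standard verification that $d_\infty$ is a metric goes through: symmetry and positivity are inherited from $d_M$, and the triangle inequality is the pointwise one followed by taking a sup. Finiteness of $d_\infty(f,g)$ for $f,g \in F_b(X,M)$ follows by fixing a basepoint $x_0 \in X$ and bounding $d_M(f(x),g(x)) \leqslant d_M(f(x),f(x_0)) + d_M(f(x_0),g(x_0)) + d_M(g(x_0),g(x))$, each summand being bounded by assumption. For the equivalence with completeness of $M$: if $M$ is complete and $(f_n)$ is $d_\infty$-Cauchy, then for each $x$ the sequence $(f_n(x))$ is Cauchy in $M$, so converges pointwise to some $f(x)$; the Cauchy condition then upgrades this to uniform convergence, and boundedness of $f$ follows as above. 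Conversely, constant sequences $f_n \equiv m_n$ identify Cauchy sequences in $M$ with Cauchy sequences of constant functions in $F_b(X,M)$, and a uniform limit of constants is a constant, so a limit in $F_b(X,M)$ yields a limit in $M$.

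For part (2), I would argue directly from the definition \eqref{Esemigroup}. The key is a two-step triangle inequality: for any $x,x' \in X$,
\begin{align*}
d_M(f(x)+g(x'), f(x')+g(x)) &\leqslant d_M(f(x)+g(x'),\, g(x)+g(x')) + d_M(g(x)+g(x'),\, f(x')+g(x)) \\
&= d_M(f(x), g(x)) + d_M(g(x'), f(x')) \\
&\leqslant 2 d_\infty(f,g),
\end{align*}
where the equality uses translation-invariance of $d_M$ with the cancellable summands $g(x')$ and $g(x)$, respectively. Taking the sup over $x,x'$ gives $d([f],[g]) \leqslant 2 d_\infty(f,g)$.

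For part (3), with $M$ a group I would define $\Phi_{x_0}([f])(x) \coloneqq f(x) - f(x_0)$. First check well-definedness: by Remark~\ref{Rdiameter}, in the group setting $f \sim g$ is equivalent to $g \equiv m_0 + f$ for some $m_0 \in M$, in which case $g(x) - g(x_0) = f(x) - f(x_0)$, so $\Phi_{x_0}$ descends to $F_b(X,M)/\sim$. By construction $\Phi_{x_0}([f])(x_0) = 0_M$ and $\Phi_{x_0}([f])$ differs from $f$ by the constant $-f(x_0)$, hence $[\Phi_{x_0}([f])] = [f]$, so $\Phi_{x_0}$ is a section with image contained in the functions vanishing at $x_0$; conversely if $h(x_0) = 0_M$ then $\Phi_{x_0}([h]) = h$, so the image is exactly that set. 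Finally, the sub-contraction property follows from translation-invariance: adding $f(x_0)+g(x_0)$ inside $d_M$ gives
\begin{equation*}
d_M\bigl(f(x)-f(x_0),\, g(x)-g(x_0)\bigr) = d_M\bigl(f(x)+g(x_0),\, g(x)+f(x_0)\bigr),
\end{equation*}
and taking the sup over $x$ realizes $d_\infty(\Phi_{x_0}[f], \Phi_{x_0}[g])$ as the specialization $x' = x_0$ of the double supremum in \eqref{Esemigroup}, which is bounded by $d([f],[g])$.

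The only part requiring real care is the well-definedness in (3): it relies on the ``only examples are translates'' statement from Remark~\ref{Rdiameter}, which is precisely where the group hypothesis on $M$ is essential (so that $-f(x_0)$ makes sense and the characterization of $\sim$-classes holds). Everything else reduces to translation-invariance and a triangle inequality.
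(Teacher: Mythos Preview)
Your proof is correct and follows essentially the same approach as the paper: part~(2) uses the identical two-step triangle inequality, and part~(3) uses the same ``Kuratowski'' section $\Phi_{x_0}([f])(x) = f(x) - f(x_0)$, with well-definedness justified via Remark~\ref{Rdiameter}. You in fact give more detail than the paper (which dismisses (1) as well-known and merely asserts that $\Phi_{x_0}$ satisfies~\eqref{Esupdist}); in particular, your observation that the sub-contraction inequality is the specialization $x' = x_0$ of the double supremum is exactly the point.
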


\begin{proof}
(1) is well-known, 
\edittwo{Certainly if $F_b(X,M)$ is complete then restricting to the
constant functions, it follows that so is $M$. Conversely, given a
uniformly Cauchy sequence $f_n : X \to M$ with $M$ complete, define $f :
X \to M$ at each $x \in X$ to be the pointwise limit of the Cauchy
sequence $\{ f_n(x) : n \geqslant 1 \}$. We claim that $d_\infty(f_n, f)
\to 0$. Indeed, given  $\epsilon > 0$, there exists $N_0 \gg 0$ such that
$d_\infty(f_n, f_m) < \epsilon/2$ for $m,n > N_0$. Now fix $n > N_0$, and
for each $x \in X$, let $m_x \gg N_0$ be such that $d_M(f_{m_x}(x), f(x))
< \epsilon/2$. By the triangle inequality, $d_M(f_n(x), f(x)) <
\epsilon$. As this holds for every $n > N_0$ and $x \in X$, we have
$d_\infty(f_n, f) \to 0$.}
and (2) is standard using:
\begin{align*}
&\ d_M(f(x) + g(x'), f(x') + g(x))\\
\leqslant &\ d_M(f(x) + g(x'), g(x) + g(x')) +
d_M(g(x) + g(x'), g(x) + f(x'))\\
\leqslant &\ 2 d_\infty(f,g), \qquad \forall
f,g \in F_b(X,M), \ x,x' \in X.
\end{align*}
To show (3), choose any representative $f$ of $[f] \in F_b(X,M) / \sim$,
recalling by Remark~\ref{Rdiameter} that $f$ is unique up to translation
by an element of $M$. Now the `Kuratowski' map $\Phi_{x_0}([f]) \coloneqq
f(x) - f(x_0)$ satisfies \eqref{Esupdist}. (We also point out for
completeness some related observations at the start of \cite[Section
3]{CS}.)
\end{proof}

Returning to the proof of the above claim, suppose $[f_n] \in F_b(X,M) /
\sim$ is Cauchy, with $M$ complete. For any fixed $x_0 \in X$, this
implies by Lemma \ref{Lmetric}(3) that $\Phi_{x_0}([f_n])$ is Cauchy,
whence it converges in the $d_\infty$ metric to some bounded map $f$ by
Lemma \ref{Lmetric}(1). Hence, $[f_n] \to [f]$ by Lemma \ref{Lmetric}(2).
\end{proof}

We now refine the above categorical construction, when the domain $X$ is
additionally equipped with a topology:

\begin{corollary}\label{Cfunctor}
Given a topological space $X$ and an abelian metric semigroup $(M, +,
d_M)$, define $C_b(X,M)$ to be the set of bounded continuous functions $:
X \to M$. Now fix a unital subring $R \subset \R$.
\begin{enumerate}
\item If $M$ is in fact an abelian group, then $C_b(X,M) / \sim$ is a
closed subobject of $F_b(X,M) / \sim$.

\item With notation as in Theorem \ref{Tsemigroup} (for any of the
categories $\mathscr{C}$), $C_b(X,M)$ forms a pseudometric subspace of
$F_b(X,M)$, whence $C_b(X,M) / \sim$ is a subobject of $F_b(X,M) / \sim$
in $\mathscr{C}$. Moreover, $M \mapsto C_b(X,M) / \sim$ is also a
covariant endofunctor of $\mathscr{C}$, which is isometric if $X$ is not
a singleton.
\end{enumerate}
\end{corollary}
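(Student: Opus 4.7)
My plan is to prove (1) and (2) in turn, leveraging Theorem \ref{Tsemigroup} and Lemma \ref{Lmetric}.

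For (1), the key tool is the Kuratowski section $\Phi_{x_0}$ from Lemma \ref{Lmetric}(3). Given $[f_n] \to [f]$ in $F_b(X,M)/\sim$ with each $f_n \in C_b(X,M)$, I would fix any $x_0 \in X$ and pass to the representatives $\tilde f_n := f_n - f_n(x_0)$ and $\tilde f := \Phi_{x_0}([f])$. Because $M$ is an abelian group, each $\tilde f_n$ is continuous (translation by the constant $f_n(x_0)$ preserves continuity). The sub-contraction inequality \eqref{Esupdist} then yields $d_\infty(\tilde f_n, \tilde f) \leqslant d([f_n], [f]) \to 0$; since uniform limits of continuous functions are continuous, $\tilde f \in C_b(X,M)$, so $[f] = [\tilde f]$ belongs to $C_b(X,M)/\sim$.

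For the subobject and functorial portions of (2), I would check that each ingredient in the proof of Theorem \ref{Tsemigroup} restricts cleanly to continuous functions: pointwise sums of continuous functions remain continuous; in the $R$-module setting, scalar multiplication by $r \in R$ is Lipschitz on $M$ (a defining property of $R$-${\tt Mod}$) hence continuous, so $rf \in C_b(X,M)$ whenever $f$ is; and every morphism $\varphi \in {\rm Hom}_{\mathscr{C}}(M,N)$ is Lipschitz hence continuous, so $\varphi \circ f \in C_b(X,N)$ when $f \in C_b(X,M)$. Hence the Theorem \ref{Tsemigroup} functor restricts to $M \mapsto C_b(X,M)/\sim$, giving a subobject of $F_b(X,M)/\sim$ in $\mathscr{C}$ with inherited pseudometric, algebraic, and (where applicable) $R$-norm structure.

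For the isometric assertion, the bound $d([\eta],[\varphi])_{C_b/\sim} \leqslant d(\eta,\varphi)$ is immediate from Theorem \ref{Tsemigroup}, since the defining supremum on the left ranges over a subset of that on the right. The reverse inequality requires continuous test functions achieving (up to $\epsilon$) the ratio realized by the sequences $(f_l, g_l)$ in the Theorem \ref{Tsemigroup} proof. I would adapt that argument by taking $f_l \equiv m_l$ constant (automatically continuous) and replacing the one-point-perturbed $g_l$ by a continuous surrogate exploiting that $X$ is not a singleton---for example, a clopen-indicator construction when $X$ admits a nontrivial separation, or the composition of a continuous map $X \to [0,1]$ with a path from $m_l$ to $m'_l$ in $M$ when $M$ is suitably connected. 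The main obstacle I foresee is producing such continuous witnesses in maximal generality; the remaining claims are routine inheritances from Theorem \ref{Tsemigroup} and Lemma \ref{Lmetric}.
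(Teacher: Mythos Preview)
Your treatment of (1) and of the subobject/endofunctor portions of (2) matches the paper's proof exactly: for (1) the paper uses precisely the $\Phi_{x_0}$-section and uniform convergence of $\Phi_{x_0}([f_n]) \to \Phi_{x_0}([f])$ via~\eqref{Esupdist}, and for (2) it simply invokes Theorem~\ref{Tsemigroup} together with ``the continuity of the $R$-module operations'' (plus part~(1) to handle the complete categories $\overline{R}$-${\tt Mod}$, $\overline{R}$-${\tt NMod}$).

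On the isometry claim you are in fact more careful than the paper, which offers no separate argument in the continuous setting beyond the blanket citation of Theorem~\ref{Tsemigroup}. The obstacle you flag is genuine: the witnesses $g_l$ in that proof are one-point perturbations of constants and need not lie in $C_b(X,M)$. Indeed, if $X=[0,1]$ and $M=\mathbb{Z}$ (a legitimate object of ${\tt Semi}$), then every continuous $X\to M$ is constant, so $C_b(X,M)/\sim$ is trivial and the induced map on ${\rm Hom}$-spaces collapses a nontrivial metric to zero---so the isometry assertion cannot hold in this generality without further hypotheses on $X$ or $M$. Your suggested repairs (clopen indicators, paths in $M$) work only under such extra assumptions. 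In short, the paper's terse proof does not settle the isometry portion, and your hesitation there is warranted rather than a defect in your argument.
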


Note that the case of $X$ compact Hausdorff and $M = \R$ was studied in
greater detail in \cite{CS}, and is part of a broader program to study
isometries and linear isomorphisms of spaces of continuous functions. See
e.g.~\cite{Ar,CS,JV}, and the references therein.

\begin{proof}
To show (1), if $f_n : X \to M$ are continuous and $[f_n] \to [f]$ for
some $f \in F_b(X,M)$, then $\Phi_{x_0}([f_n]) \to \Phi_{x_0}([f])$
uniformly by~\eqref{Esupdist}, whence $\Phi_{x_0}([f])$ is continuous 
\edittwo{by the uniform convergence theorem}
and hence so is $f$.

For the categories whose objects are not all complete, the assertion (2)
follows from Theorem \ref{Tsemigroup} and the continuity of the
$R$-module operations. For the categories $\mathscr{C} =
\overline{R}$-${\tt Mod}, \overline{R}$-${\tt NMod}$, one further uses
the previous part and that $F_b(X,M) / \sim$ is complete by Theorem
\ref{Tsemigroup}.
\end{proof}

\section{Distances between $p$-norms; proof of the main result}\label{S3}

In this section we prove Theorem \ref{Tmain}, deriving part of it from
the functorial framework discussed in the previous section.

\subsection{$p$-norm computations}

Part of the proof of Theorem~\ref{Tmain} works with the $p$-norms on
$\F^k$; thus, we begin by providing `more standard' models for certain
sets of such norms. Given $p \in [1,\infty)$, define for $\bx = (x_1,
\dots, x_k) \in \F^k$ its $p$-norm:
\begin{equation}
\| (x_1, \dots, x_k) \|_p \coloneqq \left( |x_1|^p + \cdots + |x_k|^p
\right)^{1/p},
\end{equation}

\noindent and also define $\| (x_1, \dots, x_k) \|_\infty \coloneqq
\max_j |x_j|$.

As is well-known in the Banach--Mazur framework for $\F = \R$ or
$\C$~\cite{GKM}, if $p,q \in [1,\infty]$ and $2-p, 2-q$ have the same
sign, then the norms $\| \cdot \|_p$ and $\| \cdot \|_q$ on $\F^k$ have
Banach--Mazur distance $|1/p - 1/q| \cdot \log(k)$. However, this does
not usually hold for $1 \leqslant p < 2 < q \leqslant \infty$ -- for
instance if $k=2$ then $\| \cdot \|_1$ and $\| \cdot \|_\infty$ denote
the same point in the Banach--Mazur compactum. 
In the present setting of $\mathcal{S}'_k(\F)$, the $p$-norms share the
above behavior for $p \in [1,2]$ and $[2,\infty]$, but differ in the
metric structure for $[1,\infty]$:

\begin{proposition}\label{Plp}
Let $\mathcal{S}'_k(\F) \subset \mathcal{S}_k(\F)$ denote the equivalence
classes of the norms $\{ \| \cdot \|_p : 1 \leqslant p \leqslant \infty
\}$. Then the map $f : \mathcal{S}'_k(\F) \to [0,\log k]$, given by $f(\|
\cdot \|_p) \coloneqq \frac{\log k}{p}$ for $p \in [1,\infty)$ and $f(\|
\cdot \|_\infty) \coloneqq 0$, is an isometric bijection.
\end{proposition}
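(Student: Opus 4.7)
The plan is to reduce the proposition to a direct computation of the optimal Lipschitz constants between $p$-norms on $\F^k$. Specifically, for $1 \leqslant p \leqslant q \leqslant \infty$ (using the convention $1/\infty \coloneqq 0$), I aim to show that the sharp constants in Equation~\eqref{Elip} for the pair $N = \|\cdot\|_q$, $N' = \|\cdot\|_p$ are
\[
m_{N,N'} = 1, \qquad M_{N,N'} = k^{1/p - 1/q}.
\]
Substituting into~\eqref{Emetric} yields
\[
d_{\mathcal{S}_k(\F)}([\|\cdot\|_p], [\|\cdot\|_q]) = \left(\tfrac{1}{p} - \tfrac{1}{q}\right) \log k = |f(\|\cdot\|_p) - f(\|\cdot\|_q)|,
\]
which simultaneously shows that $f$ is well-defined on $\mathcal{S}'_k(\F)$ (distinct $p$-norms represent distinct equivalence classes) and that it is an isometry onto its image. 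Bijectivity onto $[0, \log k]$ is then automatic, since $p \mapsto (\log k)/p$ is a continuous, strictly decreasing bijection $[1, \infty] \to [0, \log k]$.

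To obtain these Lipschitz constants, I would first establish the nested inequalities
\[
\|\bx\|_q \leqslant \|\bx\|_p \leqslant k^{1/p - 1/q} \|\bx\|_q, \qquad \forall \bx \in \F^k.
\]
Since $\|\bx\|_p$ depends only on the tuple $(|x_1|, \ldots, |x_k|) \in \R_{\geqslant 0}^k$, both inequalities reduce immediately to the classical $\R$-valued case. The left inequality is a standard consequence of the power-mean / Jensen inequality (or by reduction to $\|\bx\|_p = 1$, whence $|x_i| \leqslant 1$ forces $|x_i|^q \leqslant |x_i|^p$); the right inequality follows from H\"older applied to the sequences $(|x_i|^p)_{i=1}^k$ and $(1, \ldots, 1)$ with conjugate exponents $q/p$ and $q/(q-p)$. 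Endpoint cases where $p$ or $q$ equals $\infty$ follow by direct verification or a limiting argument. Tightness is then witnessed by two concrete extremal vectors: $e_1 = (1, 0, \ldots, 0)$ saturates the lower bound (both norms equal $1$), while $\mathbf{1} \coloneqq (1, \ldots, 1)$ saturates the upper bound, with $\|\mathbf{1}\|_p = k^{1/p}$ and $\|\mathbf{1}\|_q = k^{1/q}$.

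There is no serious conceptual obstacle here; the argument is essentially a classical computation of Banach-space inequalities. The two points requiring care are (i)~verifying that the $\R$-valued estimates transfer cleanly to $\F = \C$ or $\HH$ via coordinatewise absolute values, which is immediate because $|\cdot|$ is $\R$-valued and multiplicative on $\F$, and (ii)~confirming that the extremal vectors $e_1$ and $\mathbf{1}$ genuinely lie in $\F^k$, which holds since their entries are in $\R \subset \F$. The sharpness of \emph{both} Lipschitz constants is the real crux of the proof: it is precisely what upgrades $f$ from being merely $1$-Lipschitz to being an isometry, and it is also what distinguishes the topology on $\mathcal{S}_k(\F)$ from that of the Banach--Mazur compactum, where the extremal vectors $e_1$ and $\mathbf{1}$ can be interchanged by a change of basis.
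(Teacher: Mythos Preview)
Your proposal is correct and follows essentially the same route as the paper: both compute the sharp constants $m=1$, $M=k^{1/p-1/q}$ via H\"older for the upper bound and the reduction-to-$\|\bx\|_p=1$ argument for the lower bound, with the extremal vectors $\mathbf{1}$ and $e_1$ respectively, and handle $q=\infty$ separately. Your additional remarks on well-definedness, bijectivity, and the transfer to $\F=\C,\HH$ are sound and make explicit a few points the paper leaves implicit.
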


Thus the $p$-norms behave `uniformly well': $\mathcal{S}'_k(\R) \cong
\mathcal{S}'_k(\C) \cong \mathcal{S}'_k(\HH) \cong [0,\log k]$.

\begin{proof}
For $1 \leqslant p < q < \infty$, H\"older's inequality implies
$k^{-1/p} \| \bx \|_p \leqslant k^{-1/q} \| \bx \|_q$ for all $\bx \in
\F^k$, and equality is attained at the vectors with all equal
coordinates. For the other way, we claim that $\| \bx \|_p \geqslant \|
\bx \|_q$, with equality along the coordinate axes. Indeed, by rescaling
one may assume $\| \bx \|_p = 1$, whence $|x_j|^p \leqslant 1$ for all
$j$. Thus $|x_j| \leqslant 1$, and it follows that
\[
\| \bx \|_q^q = \sum_j |x_j|^q \leqslant \sum_j |x_j|^p = 1,
\]
whence $\| \bx \|_q \leqslant 1 = \| \bx \|_p$. From this it follows that
$d_{\mathcal{S}_k(\F)}(\| \cdot \|_p, \| \cdot \|_q) = \log k^{1/p -
1/q}$.

Finally, it is evident that $\| \bx \|_\infty \leqslant \| \bx \|_p
\leqslant k^{1/p} \| \bx \|_\infty$ for all $p \in [1,\infty)$ and $\bx
\in \F^k$, with equality attained in the same two cases as above. Thus,
$d_{\mathcal{S}_k(\F)}(\| \cdot \|_p, \| \cdot \|_\infty) = \log
k^{1/p}$. This concludes the proof.
\end{proof}

\begin{exam}
As another example, notice using Lemma \ref{Lbasic} that given a
non-negative measure $\mu$ supported on $[1,\infty)$, the function
\[
N_\mu(\bx) \coloneqq \int_1^\infty \| \bx \|_p\ d \mu(p)
\]
is a norm, if convergent on $\F^k$. Now the same reasoning as in the
above proof shows that for all such $\mu \geqslant 0$ with bounded
support and positive mass,
\begin{equation}
d_{\mathcal{S}_k(\F)}(N_\mu, \| \cdot \|_q) = \log \frac{\int_1^q
k^{1/p}\ d \mu(p)}{k^{1/q} \int_1^q d \mu(p)} \leqslant \log(k)(1 -
q^{-1}),
\end{equation}

\noindent where $\sup ({\rm supp}\ \mu) < q < \infty$.
\end{exam}

The above provide examples of subsets of $\mathcal{S}_k(\F)$ with bounded
diameter. However, this does not always happen, and we now mention such
an example, which also serves to show the `unboundedness' assertion in
the main result.

\begin{exam}
Given $p \in [1,\infty]$, $q \in [0,\infty)$, and an integer $1 \leqslant
j \leqslant k$, define
\begin{equation}
N_{p,q,j}({\bf x}) := \| {\bf x} \|_p + q |x_j|, \qquad {\bf x} \in \F^k
\end{equation}
and consider the family of such norms for a fixed $p$:
\begin{equation}\label{Eaxes}
\mathcal{S}_{k,p} := \{ N_{p,q,j} : q \in [0,\infty), \ j \in [k] \}.
\end{equation}
(one verifies easily that these are norms).
We now claim that akin to Proposition~\ref{Plp} for the $p$-norms, the
family $\mathcal{S}_{k,p}$ can also be realized as a more familiar metric
subspace of a Banach space:

\begin{proposition}\label{Pskp}
Suppose $k>1$. The subset $\mathcal{S}_{k,p}$ defined in~\eqref{Eaxes}
isometrically embeds into $\R^k$ with the $\ell^1$-norm, via $N_{p,q,j}
\mapsto \log (1+q) {\bf e}_j$. The image of $\mathcal{S}_{k,p}$ is the
union of the non-negative coordinate semi-axes.
\end{proposition}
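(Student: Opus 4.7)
The plan is to verify three things: first, that the map $f$ descends to a well-defined map on equivalence classes in $\mathcal{S}_{k,p}$; second, that it is distance-preserving; and third, that the image is as claimed. Surjectivity onto $\bigcup_{j=1}^k \R_{\geqslant 0}\,{\bf e}_j$ is immediate, since $q \mapsto \log(1+q)$ is a bijection $[0,\infty) \to [0,\infty)$; and well-definedness together with injectivity will drop out of the distance computation itself, together with the observation that the $q=0$ norms all coincide with $\|\cdot\|_p$ and collapse to the origin.

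The main content is computing $d_{\mathcal{S}_k(\F)}([N_{p,q,j}], [N_{p,q',j'}])$ by finding the extremal values of the ratio $N_{p,q',j'}(\bx)/N_{p,q,j}(\bx)$ over $\bx \in \F^k \setminus \{0\}$. The key device is to parametrize by
\[
s \coloneqq \frac{|x_j|}{\|\bx\|_p}, \qquad t \coloneqq \frac{|x_{j'}|}{\|\bx\|_p},
\]
each of which lies in $[0,1]$ since $|x_i| \leqslant \|\bx\|_p$ for every $i$; the ratio then takes the form $(1+q't)/(1+qs)$. I will split into two cases.

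In the diagonal case $j=j'$ one has $s=t$, and $(1+q't)/(1+qt)$ is monotone on $[0,1]$, with extremes $1$ (at $t=0$, attained on any ${\bf e}_i$ with $i \neq j$, which exists since $k>1$) and $(1+q')/(1+q)$ (at $t=1$, attained at ${\bf e}_j$). Hence $\log(M/m) = |\log(1+q) - \log(1+q')|$, matching the $\ell^1$-distance between $\log(1+q){\bf e}_j$ and $\log(1+q'){\bf e}_j$. In the off-diagonal case $j \neq j'$, the parameters $s$ and $t$ can be chosen independently in $[0,1]$, producing supremum $1+q'$ at $\bx={\bf e}_{j'}$ and infimum $1/(1+q)$ at $\bx={\bf e}_j$; thus $\log(M/m) = \log(1+q) + \log(1+q')$, again matching the $\ell^1$-distance between $\log(1+q){\bf e}_j$ and $\log(1+q'){\bf e}_{j'}$. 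As a by-product, vanishing distance forces either $(j,q) = (j',q')$ or $q=q'=0$, confirming that $f$ is well-defined and injective modulo the collapse at the origin. The only mildly subtle point, and the place where $k > 1$ is genuinely used, is the independence of $s$ and $t$ in the off-diagonal case; but this reduces to the trivial fact that distinct standard basis vectors ${\bf e}_j, {\bf e}_{j'} \in \F^k$ realize the needed extremal configurations.
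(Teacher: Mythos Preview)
Your proof is correct and follows essentially the same approach as the paper's: both compute the extremal values of the ratio $N_{p,q',j'}/N_{p,q,j}$ by splitting into the cases $j=j'$ and $j\neq j'$ and identifying the standard basis vectors as the extremizers, yielding the same distances $|\log(1+q)-\log(1+q')|$ and $\log(1+q)+\log(1+q')$. Your parametrization by $s = |x_j|/\|\bx\|_p$ and $t = |x_{j'}|/\|\bx\|_p$ is a slightly more explicit way of organizing the same computation (one small imprecision: $s$ and $t$ are not truly independent since $s^p + t^p \leqslant 1$, but this does not matter because the extremal corners $(s,t)=(1,0)$ and $(0,1)$ you actually need are both attained).
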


\begin{proof}
Notice that $N_{p,q,j}({\bf x}) \leqslant (1+q) N_{p,q',j'}({\bf x})$ for
all $q,q' \geqslant 0$ and $j \neq j' \in \{ 1, \dots, k \}$, with
equality attained at least for ${\bf x} = x {\bf e}_j$ (here, ${\bf e}_1,
\dots, {\bf e}_n$ comprise the standard basis of $\F^k$). It follows that
\[
d_{\mathcal{S}_k(\F)}(N_{p,q,j}, N_{p,q',j'}) = \log(1+q) + \log(1+q').
\]
One next shows that for a fixed $j \in \{ 1, \dots, k \}$,
\[
0 \leqslant q \leqslant q' < \infty \quad \implies \quad
N_{p,q,j}(\bx) \leqslant N_{p,q',j}(\bx) \leqslant \frac{1+q'}{1+q}
N_{p,q,j}(\bx), \qquad \forall \bx \in \F^k,
\]
with equality possible on $\F^k \setminus \{ {\bf 0} \}$ in either
inequality (note that equality in the lower bound requires $k>1$).
Therefore $d_{\mathcal{S}_k(\F)}(N_{p,q,j}, N_{p,q',j}) = \log(1+q') -
\log(1+q)$. This concludes the proof.
\end{proof}
\end{exam}

\subsection{Proof of the main result}

With Proposition~\ref{Pskp} and the functorial analysis in the previous
section in hand, we can show our main result.

\begin{proof}[Proof of Theorem \ref{Tmain}]
Begin by fixing any compact subset
\begin{equation}\label{Echoose}
X \subset \F^k \setminus \{ {\bf 0} \} \cong \R^{dk} \setminus \{ {\bf 0}
\} \qquad \text{satisfying:} \qquad
\forall \bx \in \F^k \setminus \{ {\bf 0} \}, \ \exists
\alpha_\bx \in \F^\times \text{ such that } \alpha_\bx \bx \in X.
\end{equation}

\noindent (For instance, $X$ could be the unit sphere $S^{dk-1}$.)
The bulk of the proof involves showing the claim that the space
$\mathcal{S}_k(\F)$ is a closed metric subspace of the Banach space
$C(X, \R) / \sim \; \ = C_b(X, \R) / \sim$ (see Corollary \ref{Cfunctor},
noting that $X$ is compact). In particular, $\mathcal{S}_k(\F)$ is
complete.

To show the claim, we construct the embedding $\Psi : \norm(\F^k) \to
C(X, \R)$ as follows: given a norm $N \in \norm(\F^k)$, define $\Psi(N)
\coloneqq \log N|_X \in C(X,\R)$. Since $N$ is a norm, it is uniquely
determined by its restriction to $X$, whence $\Psi$ is injective.
Moreover, the respective notions of $\sim$ are compatible via taking the
logarithm, whence $\Psi$ induces an injection $[\Psi] : \mathcal{S}_k(\F)
\hookrightarrow C(X,\R) / \sim$ of metric spaces. It is easily verified
that $[\Psi]$ is an isometry; recall here that the metric on
$\mathcal{S}_k(\F)$ is given by: $d([N], [N']) \coloneqq
\log(M_{N,N'}/m_{N,N'})$ (see Equations~\eqref{Emetric}
and~\eqref{Elip}).

It remains to show closedness. Suppose $N_l$ are norms on $\F^k$ such
that $[\log N_l|_X] \to [f]$ in $C(X,\R) / \sim$ under the metric
in~\eqref{Emetric} (recall Corollary~\ref{Cfunctor} here). As above, one
can choose representative norms $N_l$ on $\F^k$ and a function $f \in
C(X,\R)$ in their equivalence classes, such that for all $l>0$,
\begin{equation}\label{Econv}
(\log N_l) - f : X \to [0,\epsilon_l], \quad \epsilon_l \geqslant 0,
\end{equation}

\noindent with $\epsilon_l \to 0^+$ as $l \to \infty$. In particular,
$N_l \to \exp(f)$ pointwise on $X$. Define
\[
N({\bf 0}) \coloneqq 0, \qquad N(\bx) \coloneqq |\alpha_\bx|^{-1}
\exp(f(\alpha_\bx \bx)), \ \forall \bx \in \F^k \setminus \{ {\bf 0} \},
\]
where $\alpha_\bx$ comes from the defining property of $X$. The above
claim is proved if one shows that $N$ is a norm on $\F^k$. First note
that $N$ is indeed well-defined: if $\alpha_\bx$ and $\beta_\bx$ are such
that $\alpha_\bx \bx, \beta_\bx \bx \in X$ for some $\bx$, then
\[
|\alpha_\bx|^{-1} \exp( f( \alpha_\bx \bx) ) =
\lim_{l \to \infty} |\alpha_\bx|^{-1} N_l(\alpha_\bx \bx)
= \lim_{l \to \infty} |\beta_\bx|^{-1} N_l(\beta_\bx \bx)
= |\beta_\bx|^{-1} \exp( f( \beta_\bx \bx) ).
\]

Next, $N$ is homogeneous: given any scalar $\beta \in \F$ and vector $\bx
\in \F^k \setminus \{ {\bf 0} \}$,
\[
\frac{N(\beta \bx)}{N(\bx)} = \frac{|\alpha_{\beta \bx}|^{-1} \exp
f(\alpha_{\beta \bx} \beta \bx)}{|\alpha_\bx|^{-1} \exp f(\alpha_\bx
\bx)} = \lim_{l \to \infty} \frac{|\alpha_{\beta \bx}|^{-1}
N_l(\alpha_{\beta \bx} \beta \bx)}{|\alpha_\bx|^{-1} N_l(\alpha_\bx \bx)}
= \lim_{l \to \infty} |\beta| = |\beta|.
\]

Finally, observe that $N$ is sub-additive, i.e., $N(\bx + \by) \leqslant
N(\bx) + N(\by)$ for $\bx, \by \in \F^k$. Indeed, this is immediate if
any of $\bx, \by, \bx + \by$ is zero, so assume this does not happen, and
compute:
\begin{align*}
N(\bx + \by) = &\ |\alpha_{\bx + \by}|^{-1} \lim_{l \to \infty}
N_l(\alpha_{\bx + \by} (\bx + \by))\\
\leqslant &\ |\alpha_{\bx + \by}|^{-1} \lim_{l \to \infty}
N_l(\alpha_{\bx + \by} \bx) + N_l(\alpha_{\bx + \by} \by)\\
= &\ |\alpha_{\bx + \by}|^{-1} \lim_{l \to \infty} \left( 
\frac{| \alpha_{\bx + \by}|}{|\alpha_\bx|} N_l(\alpha_\bx \bx) +
\frac{| \alpha_{\bx + \by}|}{|\alpha_\by|} N_l(\alpha_\by \by) \right)\\
= &\ N(\bx) + N(\by).
\end{align*}
The closedness of $\mathcal{S}_k(\F)$ now follows, whence by
Theorem~\ref{Tsemigroup} and Corollaries~\ref{C18} and~\ref{Cfunctor}, we
have a chain of inclusions with closed images
\[
\mathcal{S}_k(\F) \hookrightarrow \mathcal{S}_{dk}(\R) \hookrightarrow
C_b(X,\R) / \sim \ \hookrightarrow F_b(X, \R) / \sim \; ;
\]
The above claim now follows; hence $\mathcal{S}_k(\F)$ is complete. Next,
Lemma \ref{Lbasic} implies that $\norm(\F^k)$ is convex, hence
path-connected, whence so is $\mathcal{S}_k(\F)$.
Moreover, clearly $\mathcal{S}_1(\F)$ is a point.
Finally, assuming $k>1$, Proposition~\ref{Pskp} shows that
$\mathcal{S}_k(\F)$ is unbounded.
\end{proof}

The above proof shows that the metric space of norms embeds into $C(X,\R)
/ \sim$ for many different compact topological subspaces $X \subset \F^k$
(see \eqref{Echoose}). We conclude by exploring these embeddings in
greater detail, fixing $\F = \R$ for convenience.
Specifically, if $X = S^{k-1}$ denotes the unit sphere, then under the
embedding $\mathcal{S}_k(\R) \hookrightarrow \mathbb{B} \coloneqq
C(S^{k-1},\R) / \sim$, the origin in the Banach space $\mathbb{B}$ is
precisely the image of the $2$-norm $\| \cdot \|_2$. More generally, we
have such an identification of the origin for \textit{every} norm -- but
not for every space $X$.

\begin{proposition}\label{Porigin}
Given a norm $N : \R^k \to \R$ and any radius $r>0$, let $X_{N,r}$ denote
the sphere
\[
X_{N,r} \coloneqq \{ \bx \in \R^k : N(\bx) = r \}.
\]
Now fix a norm $N$ on $\R^k$, as well as a subset $X \subset \R^k$ such
that every nonzero vector is a positive real multiple of a point in $X$.
Then the following are equivalent:
\begin{enumerate}
\item $\mathcal{S}_k(\R)$ embeds as a closed subset in the Banach space
$C(X,\R) / \sim$ for some topological space $X$ via $[N'] \mapsto [\log
N'|_X]$; and this embedding maps the equivalence class $[N]$ to the
origin.

\item $X = X_{N,r}$ for some $r>0$.
\end{enumerate}

\noindent However, there exist compact sets $X \subset \R^k$ such that
the image of the embedding $\mathcal{S}_k(\R) \hookrightarrow C(X,\R) /
\sim$ avoids the origin.
\end{proposition}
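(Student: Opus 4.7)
The plan is to identify the origin of $C(X,\R)/\sim$ with the class of constant functions, translate the condition ``the embedding sends $[N]$ to the origin'' into ``$N$ is constant on $X$,'' and then exploit the ray-covering hypothesis on $X$ to sharpen this to $X = X_{N,r}$.

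By Remark~\ref{Rdiameter} applied to the abelian metric group $M = \R$, the distance from a class $[f]$ to the origin in $C_b(X,\R)/\sim$ equals $\diam({\rm im}(f))$; in particular $[f] = 0$ iff $f$ is constant on $X$. Applied to $f = \log N|_X$, this shows that $[N]$ lands at the origin under $[N'] \mapsto [\log N'|_X]$ if and only if $N$ is constant on $X$, say $N|_X \equiv r > 0$, which is the inclusion $X \subseteq X_{N,r}$.

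For (1)$\Rightarrow$(2), the preceding observation produces some $r>0$ with $X \subseteq X_{N,r}$. I would then invoke homogeneity of $N$: along each open ray $\{t\by : t > 0\}$ the map $t \mapsto N(t\by) = t N(\by)$ is strictly increasing, so $X_{N,r}$ meets each such ray in exactly one point. Since by hypothesis $X$ also meets every ray, and $X \subseteq X_{N,r}$, the set $X$ must contain the unique $r$-level point of every ray; hence $X = X_{N,r}$. For (2)$\Rightarrow$(1), if $X = X_{N,r}$ then $\log N|_X \equiv \log r$ is constant, so $[\log N|_X] = 0$. It remains to verify that $X_{N,r}$ is a valid compact ``test set'' as in~\eqref{Echoose}: compactness follows because $N$ is continuous and Lipschitz-equivalent to $\|\cdot\|_2$ (Lemma~\ref{Lequiv}), so $X_{N,r}$ is closed and bounded in $\R^k$; the ray-covering property holds because any $\bx \neq 0$ equals $N(\bx)/r$ times the point $(r/N(\bx))\bx \in X_{N,r}$. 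The closed isometric embedding $\mathcal{S}_k(\R) \hookrightarrow C(X_{N,r},\R)/\sim$ is then supplied verbatim by the proof of Theorem~\ref{Tmain}.

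Finally, to exhibit a compact $X \subset \R^k$ for which the image of the embedding avoids the origin, I would take $X = S^{k-1} \cup \{2\bx_0\}$ for any fixed $\bx_0 \in S^{k-1}$. This $X$ is compact and meets every ray via $S^{k-1}$. For any norm $N$, homogeneity forces $N(2\bx_0) = 2 N(\bx_0) \neq N(\bx_0)$ (since $N(\bx_0) > 0$), so $N$ is not constant on $X$; hence $[\log N|_X] \neq 0$ for every $[N] \in \mathcal{S}_k(\R)$. The main obstacle is the implication (1)$\Rightarrow$(2): one must upgrade the soft conclusion ``$N$ is constant on $X$'' together with the coarse ray-hitting hypothesis into the sharp set-equality $X = X_{N,r}$, and this is where strict monotonicity of $N$ along rays is essential.
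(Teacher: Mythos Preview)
Your proof of the equivalence (1)$\Leftrightarrow$(2) is correct and essentially identical to the paper's: both reduce ``$[N]$ maps to the origin'' to ``$N|_X$ is constant,'' then use homogeneity along rays to upgrade $X \subseteq X_{N,r}$ to equality, and both verify (2)$\Rightarrow$(1) by checking that $X_{N,r}$ is compact and satisfies the ray-covering condition~\eqref{Echoose}.

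Your counterexample for the final assertion is genuinely different from the paper's and is simpler. The paper builds an off-centre Euclidean sphere $X_\by = \{\bx : \|\bx - \by\|_2^2 = 1 + \|\by\|_2^2\}$ passing through the origin's antipode-free intersection with the line $\R\by$, and argues that the two intersection points with $\R\by$ have unequal norms. Your $X = S^{k-1} \cup \{2\bx_0\}$ dispatches the issue in one line via $N(2\bx_0) = 2N(\bx_0)$. The tradeoff is that the paper's $X_\by$ meets each open ray in \emph{exactly one} point, so it is itself a topological sphere satisfying~\eqref{Echoose} minimally; this shows the stronger phenomenon that even a ``sphere-like'' $X$ with no redundancy can fail to be any $X_{N,r}$. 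Your example instead exploits a ray hitting $X$ twice, which makes the argument immediate but yields a slightly weaker qualitative conclusion. Both are valid for the proposition as stated.
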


\begin{proof}
If (1) holds, then $N|_X$ must be constant, whence $X \subset X_{N,r}$
for some $r>0$. Moreover, if $\bx \in X_{N,r}$, then $\alpha \bx \in X$
for some $\alpha>0$. Since $N(\alpha \bx) = \alpha r$, it follows that
$\alpha = 1$ and hence $X = X_{N,r}$.

Conversely, suppose (2) holds. Since all norms on $\R^k$ are equivalent,
the space $X_{N,r}$ is compact and hence satisfies \eqref{Echoose},
whence the proof of Theorem \ref{Tmain} applies to it. In particular, the
image of $N$ under the embedding $: \mathcal{S}_k(\R) \hookrightarrow
C(X_{N,r}, \R) / \sim$ is a constant function, whose image under $\sim$
is the trivial class.

Finally, given any point $\by \neq 0$, define the sphere
\[
X_\by \coloneqq \{ \bx \in \R^k : \| \bx - \by \|_2^2 = 1 + \| \by
\|_2^2 \}.
\]
It is clear that ${\bf 0}$ is in the `interior' of the sphere. Also
notice that for every unit direction ${\bf v} \in S^{k-1}$, there exists
a unique $\alpha>0$ such that $\alpha {\bf v} \in X_\by$. Indeed, from
the conditions
\[
\| \alpha {\bf v} - \by \|^2_2 = 1 + \| \by \|^2_2, \qquad \alpha > 0,
\]
one derives: $\alpha = \sqrt{1 + \langle {\bf v}, \by \rangle^2} +
\langle {\bf v}, \by \rangle$. In particular, $X_\by$ satisfies
\eqref{Echoose} and hence the proof of Theorem \ref{Tmain} applies to it.
However, no norm maps via the embedding $\mathcal{S}_k(\R)
\hookrightarrow \mathbb{B} \coloneqq C(X_\by,\R) / \sim$ to the origin in
the Banach space $\mathbb{B}$. Indeed, if $[N] \mapsto 0_{\mathbb{B}}$,
then the norm $N$ would restrict to a constant on $X_\by$. But this is
false: $X_\by$ intersects the line $\R \by$ at the two points $( \| \by
\|_2 \pm \sqrt{1 + \| \by \|_2}) \by$, and as these are not negatives of
one another, evaluating $N$ yields unequal values.
\end{proof}

\section{The normed space of metrics on a finite set}\label{Sdistortion}

We now study a parallel setting to the metric space of norms on $\F^k$,
in which the above functorial approach is also applicable. Given a finite
set $[n] := \{ 1, \dots, n \}$ with $n \geqslant 2$, it is possible to
impose a pseudometric on the space of metrics on $[n]$ in the same way as
above: given metrics $\rho, \rho' : X \times X \to [0,\infty)$, define
\[
d_{[n]}(\rho, \rho') :=
\log \ \max_{j \neq k} \frac{\rho'(x_j, x_k)}{\rho(x_j, x_k)}
\ -\ \log \ \min_{j \neq k} \frac{\rho'(x_j, x_k)}{\rho(x_j, x_k)};
\]
note that $\exp \circ d_{[n]}$ is termed the \textit{distortion} in
metric geometry and computer science.

We cite the well-known surveys~\cite{Li,Ma} for further details and
reading on the numerous applications of distortion and metric geometry to
computer science, combinatorics, and other fields. 
Also note that there is a different, well-studied pseudometric on the
space of metrics on $[n]$, or more generally on all compact metric
spaces. This is the Gromov--Hausdorff metric (which is not comparable to
$d_{[n]}$, as we see below). Nevertheless, the metric $d_{[n]}$, as well
as the connection between diameter norms and (log-)distortion, do not
seem to be studied or recorded in the literature. This motivates the
present section.

We begin with a result that is parallel to Theorem~\ref{Tmain} for
$\mathcal{S}_k(\F)$, and again follows from the above functorial
analysis. In particular, it shows that the metric $d_{[n]}$ is also a
diameter norm:

\begin{theorem}\label{Tmetric}
Fix an integer $n \geqslant 2$.
\begin{enumerate}
\item The map $d_{[n]}$ is a pseudometric on the space of metrics on
$[n]$, with equivalence classes precisely consisting of proportional
metrics.

\item The quotient metric space $\mathcal{S}([n])$ is a complete,
path-connected, metric subspace of the Banach space $\R^{\binom{[n]}{2}}
/ \sim \ \; = C(\binom{[n]}{2}, \R) / \sim$ with the diameter norm, where
$\binom{[n]}{2}$ denotes the discrete set of two-element subsets of
$[n]$.

\item The space $\mathcal{S}([n])$ is a singleton if $n=2$, and unbounded
otherwise.
\end{enumerate}
\end{theorem}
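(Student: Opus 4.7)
The plan is to mirror the proof of Theorem~\ref{Tmain} by working through the functorial framework of Section~\ref{S2}, with the finite discrete space $X = \binom{[n]}{2}$ playing the role of the compact set $X \subset \F^k \setminus \{\mathbf{0}\}$. A metric $\rho$ on $[n]$ is determined by its values $\rho(j,k) > 0$ on the $\binom{n}{2}$ unordered pairs, and I would define an injection $\Psi$ from the set $\mathcal{M}([n])$ of metrics on $[n]$ into $C(\binom{[n]}{2}, \R) = \R^{\binom{[n]}{2}}$ by $\Psi(\rho) \coloneqq \log \rho$. Since the domain is finite-discrete, $F_b = C_b = C$ on $\binom{[n]}{2}$, so Theorem~\ref{Tsemigroup} and Corollary~\ref{Cfunctor} (applied to $\mathscr{C} = \overline{\R}\text{-}{\tt NMod}$ with $M = \R$) give a complete Banach space $\R^{\binom{[n]}{2}}/\sim$ whose norm, by Remark~\ref{Rdiameter}, is precisely the diameter seminorm $N(f) = \max_e f(e) - \min_e f(e)$.

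For (1), the key observation is that $d_{[n]}(\rho, \rho') = N(\log \rho' - \log \rho)$: the two logarithmic terms in $d_{[n]}$ are exactly the max and min of $\log(\rho'/\rho)$ over $\binom{[n]}{2}$. So $\Psi$ converts $d_{[n]}$ into the diameter norm on $\R^{\binom{[n]}{2}}/\sim$, and the fact that $d_{[n]}$ is a pseudometric with equivalence classes $\{c\rho : c > 0\}$ follows from Remark~\ref{Rdiameter}: two log-functions differ by an additive constant iff the original metrics are proportional.

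For (2), I would show that the image $\Psi(\mathcal{M}([n]))/\sim$ is closed in $\R^{\binom{[n]}{2}}/\sim$; then completeness of $\mathcal{S}([n])$ is inherited from the Banach space. Closedness is the main obstacle, entirely analogous to the closedness step in the proof of Theorem~\ref{Tmain}: given a Cauchy sequence $[\log \rho_\ell]$, I would rescale representatives so $\log \rho_\ell \to f$ uniformly on the finite set $\binom{[n]}{2}$, set $\rho \coloneqq \exp f$, and verify that $\rho$ is a metric. Positivity and symmetry are immediate; the triangle inequality $\rho_\ell(j,k) \leqslant \rho_\ell(j,i) + \rho_\ell(i,k)$ survives the pointwise limit, giving $\rho(j,k) \leqslant \rho(j,i) + \rho(i,k)$. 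Path-connectedness comes from the fact that $\mathcal{M}([n])$ is a convex cone: for any two metrics $\rho_0, \rho_1$, the segment $t \mapsto (1-t)\rho_0 + t\rho_1$ consists of metrics, and its image in $\mathcal{S}([n])$ is a continuous path.

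For (3), when $n = 2$ there is only one pair, so every metric is a positive scalar, and the quotient is a single point. For $n \geqslant 3$, I would produce an unbounded family explicitly: take $\rho$ to be any reference metric and $\rho_q$ the metric on $[n]$ with $\rho_q(1,2) = q$ and all other pairwise distances equal to some value (say $1$) chosen so that the triangle inequalities are satisfied for small $q > 0$ (the constant metric on the remaining edges plus a small edge $(1,2)$ works whenever $n \geqslant 3$, since the relevant triangles read $q \leqslant 2$ and $1 \leqslant 1 + q$). Then $d_{[n]}(\rho_1, \rho_q) = |\log q| \to \infty$ as $q \to 0^+$, establishing unboundedness.
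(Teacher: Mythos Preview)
Your proposal is correct and follows essentially the same approach as the paper: parts (1) and (2) are derived from the functorial framework of Section~\ref{S2} via the embedding $\Psi(\rho) = \log \rho$ into $\R^{\binom{[n]}{2}}/\sim$, exactly paralleling the proof of Theorem~\ref{Tmain}, and part (3) is handled by an explicit unbounded family. The only minor difference is in the choice of that family: the paper uses subsets $\{1,\dots,n-1,n+m+1\} \subset \R$ with the induced metric (compared against the discrete metric), whereas you use the ``one short edge'' metrics $\rho_q$ --- but your construction is equally valid and in fact coincides with the family $\rho_{j,a}$ that the paper introduces immediately afterward in Proposition~\ref{Ps1n}.
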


\begin{proof}
We only point out why for $n>2$ the space $\mathcal{S}([n])$ is
unbounded. Indeed, for each $m \geqslant 1$ let $X_m := \{ 1, 2, \dots,
n-1; n+m+1 \}$ be the (induced) metric subspace of $(\R, | \cdot |)$, and
let $\rho_{n,m} : [n] \times [n] \to \R$ be the metric induced by the
unique rank/order-preserving map $: X_m \to [n] \subset \R$. Compare
$\rho_{n,m}$ to the discrete metric $\rho(x,y) = 1 - \delta_{x,y}$: the
log-distortion between them is $\log (n+m)$, and $m$ can grow without
bound.
\end{proof}

\begin{remark}
The metric on $\mathcal{S}([n])$ -- henceforth denoted by
$d_{\mathcal{S}([n])}$ -- is not comparable to the well-studied
Gromov--Hausdorff metric on compact metric spaces:
\[
d_{GH}(X_1, X_2) := \inf_{Z, \iota_1, \iota_2} d_H(\iota_1(X_1),
\iota_2(X_2)),
\]
where one runs over all metric spaces $Z$ and isometric embeddings $:
\iota_j : X_j \hookrightarrow Z$; and where $d_H$ denotes the Hausdorff
distance in $Z$. To see why $d_{GH}$ is not comparable to the above
metric $d_{\mathcal{S}([n])}$, for any $n>2$ choose two different pairs
of points from $[n]$, say $\{ a, b \} \neq \{ c, d \} \subset [n]$. Now
define metrics $\rho, \rho'$ on $[n]$ via: $\rho(a,b) = \rho'(c,d) = 1/2$
and all other nonzero values of $\rho, \rho'$ are $1$. These two metric
spaces are clearly isometric under $a \leftrightarrow c, \ b
\leftrightarrow d$, and all other points left unchanged. However, the
metrics are not proportional. Going the other way, proportional but
unequal metrics on $[n]$ do not admit an isometry between them.
\end{remark}

Our next few results are meant to help better understand the metric space
$\mathcal{S}([n])$. The following result parallels
Proposition~\ref{Pskp}, and illustrates how a certain one-parameter
family of metrics on $[n]$ can be understood through a more standard
model:

\begin{proposition}\label{Ps1n}
Fix integers $n > 0$ and $1 \leqslant j \leqslant \binom{n}{2}$, as well
as any bijection to identify the set of pairs $\binom{[n]}{2}$
(i.e.~edges between points in $[n]$) with the set $\left[ \binom{n}{2}
\right] = \{ 1, \dots, \binom{n}{2} \}$. Given $a \in (0,1]$, let
$\rho_{j,a} : [n] \times [n] \to \R$ denote the metric in which all
nonzero distances in $[n]$ are $1$, except for the distance corresponding
to the edge $j$, which is $a$. Define
\[
\mathcal{S}'([n]) := \{ \rho_{j,a} : j \in \textstyle{\left[ \binom{n}{2}
\right]}, \ a \in (0,1] \}.
\]
Then $\mathcal{S}'([n])$ (or its set of equivalence classes) embeds
isometrically into $\R^{\binom{n}{2}}$ with the $\ell^1$-norm, via:
$\rho_{j,a} \mapsto (\log a) {\bf e}_j$. The image is the union of the
non-positive coordinate semi-axes.
\end{proposition}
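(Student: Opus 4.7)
The plan is to do a direct computation: for any two metrics $\rho_{j,a}, \rho_{j',a'} \in \mathcal{S}'([n])$, I would compute $d_{\mathcal{S}([n])}([\rho_{j,a}], [\rho_{j',a'}])$ using the definition of $d_{[n]}$ from the beginning of this section, and compare it to the $\ell^1$-distance $\| (\log a) \mathbf{e}_j - (\log a') \mathbf{e}_{j'} \|_1$ in $\R^{\binom{n}{2}}$. The computation splits into two cases according to whether $j = j'$, and the constraint $a, a' \in (0,1]$ (so that $\log a, \log a' \leqslant 0$) makes the max/min identification immediate.

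First I would handle the case $j = j'$. Here the ratio $\rho_{j,a'}(\cdot,\cdot)/\rho_{j,a}(\cdot,\cdot)$ takes the value $a'/a$ on edge $j$ and the value $1$ on all other edges. Hence $\log \max - \log \min = |\log(a'/a)| = |\log a - \log a'|$, which agrees with $\| (\log a - \log a') \mathbf{e}_j \|_1$. Next, for $j \neq j'$, the same ratio takes the value $1/a$ on edge $j$, the value $a'$ on edge $j'$, and $1$ elsewhere. Since $1/a \geqslant 1 \geqslant a'$, the log-distortion equals $\log(1/a) - \log a' = -\log a - \log a' = |\log a| + |\log a'|$, which is exactly $\| (\log a) \mathbf{e}_j - (\log a') \mathbf{e}_{j'} \|_1$. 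Combining the two cases shows the proposed map is an isometry of metric spaces, once it is known to be well-defined on equivalence classes.

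For well-definedness, I would briefly verify that proportional metrics in $\mathcal{S}'([n])$ are sent to the same point. If $\rho_{j',a'} = c \rho_{j,a}$ for some $c>0$, comparing values on any edge distinct from $j$ and $j'$ (such edges exist because $\binom{n}{2} \geqslant 2$ whenever there is any nontrivial identification to check; if $\binom{n}{2} = 1$ the statement is trivial) forces $c = 1$, and then forces $a = a' = 1$ in the case $j \neq j'$, and $a = a'$ in the case $j = j'$. Either way, both points map to the same element of $\R^{\binom{n}{2}}$, and in fact the only nontrivial identification among the $\rho_{j,a}$ is that all $\rho_{j,1}$ coincide with the discrete metric and all map to the origin, consistent with $\log 1 = 0$.

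Finally, the image description follows by inspection: as $a$ ranges over $(0,1]$ and $j$ over $\left[ \binom{n}{2} \right]$, the image $(\log a) \mathbf{e}_j$ traces out exactly the union of the non-positive coordinate semi-axes, with the origin corresponding to the (unique) discrete metric class. I do not anticipate a serious obstacle here: the only step requiring any thought is correctly identifying where the max and min of the ratio of the two metrics are attained, which is clean because $a, a' \leqslant 1$ pins down signs uniformly.
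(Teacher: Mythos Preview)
Your proposal is correct and follows essentially the same approach as the paper's own proof: a direct case-by-case computation of the log-distortion according to whether $j=j'$ or $j\neq j'$, yielding $|\log a - \log a'|$ and $-\log a - \log a'$ respectively, which matches the $\ell^1$-distance between $(\log a)\mathbf{e}_j$ and $(\log a')\mathbf{e}_{j'}$. Your additional verification of well-definedness on equivalence classes and of the image description are sound extras that the paper leaves implicit.
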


The set $\mathcal{S}'([n])$ comprises the metrics in which $[n]$ may be
viewed as a weighted graph with all edge weights but one equal and at
least as large as the remaining edge weight.

\begin{proof}
Write $N := \binom{n}{2}$ for convenience. Viewing each metric $\rho$ on
$[n]$ as a function $: [N] \to (0,\infty)$, say $(\rho^{(1)}, \dots,
\rho^{(N)})^T$, it follows that
\[
d_{\mathcal{S}([n])}(\rho_{j,a}, \rho_{j',a'}) =
\log \max_{1 \leqslant k \leqslant N}
\frac{\rho_{j,a}^{(k)}}{\rho_{j',a'}^{(k)}} -
\log \min_{1 \leqslant k \leqslant N}
\frac{\rho_{j,a}^{(k)}}{\rho_{j',a'}^{(k)}}.
\]
Now if $j=j'$ then the distance is $|\log a - \log a'|$, else the
distance is $- \log a - \log a'$.
\end{proof}

\begin{remark}\label{Rs1n}
Notice that the function $\rho_{j,a}$ is a metric on $[n]$ if and only if
$a \in (0,2]$. Thus, one can compute the distance between $\rho_{j,a}$
and $\rho_{j',a'}$ for $a,a' \in (0,2]$. If $j=j'$ then we once again
obtain $|\log a - \log a'|$, but if $j \neq j'$ then we have:
\[
d_{\mathcal{S}([n])}(\rho_{j,a}, \rho_{j',a'}) =
\begin{cases}
|\log a| + |\log a'|, & \text{if either } a,a' \geqslant 1 \text{ or
} a,a' \leqslant 1;\\
\max \{ |\log a|, |\log a'| \}, \qquad & \text{otherwise}.
\end{cases}
\]
\end{remark}

We next study \textit{embeddings} in $\mathcal{S}([n])$ of metric spaces.
In doing so, we are motivated by recent work~\cite{IIT}, where it was
shown that every finite metric space of size at most $\binom{n}{2}$
embeds isometrically into the Gromov--Hausdorff space of isometry classes
of $n$-element metric spaces. In other words, a representative from the
Gromov--Hausdorff equivalence class of every metric space of size at most
$\binom{n}{2}$ embeds isometrically into Gromov--Hausdorff space. The
following result is parallel in spirit, for the space $\mathcal{S}([n])$:

\begin{theorem}\label{TSnembed}
Let $(X,d)$ be a finite metric space, and $n \geqslant 3$ an integer such
that $X \leqslant \binom{n}{2}$. Then there exists an equivalent metric
space to $(X,d)$ -- i.e., a rescaling of $d$ -- that admits an isometric
embedding into $\mathcal{S}([n])$.
\end{theorem}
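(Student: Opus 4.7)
My plan is to adapt the classical Fr\'echet embedding of finite metric spaces into $\ell^\infty$, exploiting the identification in Theorem~\ref{Tmetric}(2): the metric on $\mathcal{S}([n])$ is the diameter seminorm on $\R^{\binom{[n]}{2}} / \sim$ pulled back along $\rho \mapsto \log \rho$. Rescaling $(X,d)$ will correspond to scaling all these logarithms by a common factor, which is precisely why the theorem only asks for an equivalent rescaling and not for $(X,d)$ itself.

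Concretely, I would enumerate $X = \{x_1, \dots, x_m\}$ with $m \leqslant \binom{n}{2}$, fix any injection $\iota : X \hookrightarrow \binom{[n]}{2}$, and for a small parameter $\epsilon > 0$ (to be chosen below) define, for each $i \in [m]$, a function $\rho_i : \binom{[n]}{2} \to (0, \infty)$ by $\rho_i(\iota(x_j)) \coloneqq \exp(\epsilon \, d(x_i, x_j))$ for each $j \in [m]$, and $\rho_i(e) \coloneqq 1$ on all remaining edges. The proposed embedding then sends $x_i \mapsto [\rho_i] \in \mathcal{S}([n])$.

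The main obstacle I expect is verifying that each $\rho_i$ extends to a genuine metric on $[n]$ -- the triangle inequality would fail for large $\epsilon$. I would circumvent this by choosing $\epsilon$ small enough that $\exp(\epsilon \cdot \diam(X,d)) \leqslant 2$, say $\epsilon \coloneqq (\log 2)/\diam(X,d)$, so that every value of every $\rho_i$ lies in $[1,2]$. Then for any triple $a,b,c \in [n]$ one automatically has $\rho_i(a,c) \leqslant 2 \leqslant \rho_i(a,b) + \rho_i(b,c)$, while positivity and symmetry are clear after setting $\rho_i(a,a) \coloneqq 0$.

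Finally, I would verify the isometry directly from the formula for $d_{\mathcal{S}([n])}$. Writing $v_i \coloneqq \log \rho_i$, on the edge $\iota(x_k)$ the difference $v_i - v_j$ equals $\epsilon(d(x_i,x_k) - d(x_j,x_k))$, which by the triangle inequality in $(X,d)$ lies in $[-\epsilon\, d(x_i,x_j),\, \epsilon\, d(x_i,x_j)]$ with both extremes attained (at $k=j$ and $k=i$ respectively); on every padded edge the difference is $0$, which is inside this interval. Hence $d_{\mathcal{S}([n])}([\rho_i],[\rho_j]) = \diam(v_i - v_j) = 2\epsilon \, d(x_i, x_j)$, so the rescaled metric $2\epsilon \cdot d$ on $X$ embeds isometrically into $\mathcal{S}([n])$, as required.
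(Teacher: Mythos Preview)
Your proof is correct and follows essentially the same route as the paper: both rescale so that the Fr\'echet/Kuratowski coordinates lie in $[0,\log 2]$, exponentiate to obtain metrics on $[n]$ with all edge-weights in $[1,2]$ (making the triangle inequality automatic), and read off the isometry from the diameter-norm description of $d_{\mathcal{S}([n])}$. The only cosmetic difference is that you use the full $m$-coordinate Kuratowski embedding and verify the distance directly (obtaining the explicit factor $2\epsilon$), whereas the paper packages the same computation via the $(m-1)$-coordinate Fr\'echet map composed with an abstract padding $\Psi_{|X|-1,N}$ into $(\R^N/\sim,\diam)$.
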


\noindent Note that the result fails to hold for $n=2$, since
$\mathcal{S}([2])$ is a singleton.

Before proving Theorem~\ref{TSnembed}, we recall that its
Gromov--Hausdorff analogue in~\cite{IIT} was stated using the
\textit{smallest} $n$ such that $|X| \leqslant \binom{n}{2}$. While our
variant does not \textit{a priori} have this extra restriction, we point
out that the two versions are equivalent for $\mathcal{S}([n])$, because
of the following result:

\begin{proposition}\label{PSnembed}
For all $n \geqslant 2$, the metric space $\mathcal{S}([n])$
isometrically embeds into $\mathcal{S}([n+1])$.
\end{proposition}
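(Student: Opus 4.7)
The plan is to construct an explicit isometric embedding by extending each metric on $[n]$ to a metric on $[n+1]$ in a scaling-equivariant way. Given a metric $\rho$ on $[n]$, let $M_\rho \coloneqq \max_{j \neq k \in [n]} \rho(j,k)$ denote its diameter, and define $\tilde\rho$ on $[n+1]$ by $\tilde\rho|_{[n] \times [n]} \coloneqq \rho$ and $\tilde\rho(i, n+1) \coloneqq M_\rho$ for every $i \in [n]$. The map $\Phi \colon [\rho] \mapsto [\tilde\rho]$ will be the claimed embedding.

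First I would verify that $\tilde\rho$ is indeed a metric: the only nontrivial triangle inequalities involve the new point $n+1$, and they reduce to $\rho(i,j) \leqslant 2 M_\rho$ and $M_\rho \leqslant \rho(i,j) + M_\rho$, both obvious. Since $M_{c\rho} = c M_\rho$ for every $c>0$, the extension is equivariant, $\widetilde{c\rho} = c\tilde\rho$, so $\Phi$ descends to a well-defined map $\mathcal{S}([n]) \to \mathcal{S}([n+1])$. Injectivity is immediate: if $\tilde\rho' = c \tilde\rho$ on $[n+1]$, then restricting to $[n]$ gives $\rho' = c\rho$.

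The heart of the argument is the isometry check. Fix representative metrics $\rho, \rho'$ on $[n]$, and set
\[
m \coloneqq \min_{j \neq k \in [n]} \frac{\rho'(j,k)}{\rho(j,k)}, \qquad M \coloneqq \max_{j \neq k \in [n]} \frac{\rho'(j,k)}{\rho(j,k)},
\]
so that $d_{\mathcal{S}([n])}([\rho],[\rho']) = \log(M/m)$. The key observation is that the pointwise inequalities $m\rho \leqslant \rho' \leqslant M\rho$ pass to the maxima, giving $mM_\rho \leqslant M_{\rho'} \leqslant M M_\rho$; hence each new ratio $\tilde\rho'(i,n+1)/\tilde\rho(i,n+1) = M_{\rho'}/M_\rho$ lies in $[m,M]$. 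Consequently the max and min of $\tilde\rho'/\tilde\rho$ over pairs in $\binom{[n+1]}{2}$ are attained on pairs in $[n]$ and equal $M$ and $m$ respectively, so $d_{\mathcal{S}([n+1])}([\tilde\rho],[\tilde\rho']) = \log(M/m) = d_{\mathcal{S}([n])}([\rho],[\rho'])$.

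I do not expect any serious obstacle; the whole proof hinges on finding an extension that is simultaneously (i) a genuine metric, (ii) linearly scaling in $\rho$ (to descend to equivalence classes), and (iii) has newly introduced ratios automatically bounded between the old min and max (so no new extrema are created). Setting $\tilde\rho(i,n+1) \coloneqq M_\rho$ is the cleanest choice accomplishing all three, and the bound in (iii) is a trivial consequence of taking $\max$ in the Lipschitz-equivalence inequalities.
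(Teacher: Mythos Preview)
Your proposal is correct and follows exactly the same construction as the paper: extend each metric on $[n]$ by placing the new point $n+1$ at distance $\diam(\rho)$ from every old point, and then check that the resulting map on equivalence classes is an isometry. The paper's proof merely sketches this (``a straightforward computation\ldots''), whereas you have spelled out the details, in particular the key step that $m M_\rho \leqslant M_{\rho'} \leqslant M M_\rho$ forces the new ratio $M_{\rho'}/M_\rho$ to lie in $[m,M]$ and hence not to alter the extrema.
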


\begin{proof}
Given a finite metric space $(X,d)$ with $|X|=n$, embed it into a metric
space $X \sqcup \{ n+1 \}$, where $n+1$ is an additional point
with distance $\diam(X)$ from every $x \in X$. A straightforward
computation (perhaps rescaling both diameters to $1$ for convenience) now
shows that this defines an isometry from $\mathcal{S}([n])$ into
$\mathcal{S}([n+1])$.
\end{proof}

We now prove the above theorem.

\begin{proof}[Proof of Theorem~\ref{TSnembed}]
In fact we will construct the embedding $: (X, \alpha \cdot d)
\hookrightarrow \mathcal{S}([n])$ for a specific $\alpha > 0$, using
several `natural' tools. We begin by describing these tools.

Observe that every metric on $[n]$ can be viewed as an element of
$(0,\infty)^N$ where $N = \binom{n}{2}$. Via taking logarithms, the space
$\mathcal{S}([n])$ is in bijection with the set $\Psi([n])$ of all tuples
$(\psi_{ij})^T \in \R^{\binom{[n]}{2}} \cong \R^N$ (here $i<j$) such that
\begin{equation}\label{Edistances}
\exp(\psi_{ij}) + \exp(\psi_{jk}) \geqslant \exp(\psi_{ik}), \qquad
\forall 1 \leqslant i < j < k \leqslant n.
\end{equation}
Note that rescaling the metric by $\alpha > 0$ is equivalent to
translating all $\psi_{ij}$ by $\log \alpha$. In other words, $\Psi([n])$
sits inside $\R^N / \sim$, where $\sim$ denotes additive translations by
scalar multiples of $(1,\dots,1)^T$.

The next observation is that for an integer $p>0$, the space $(\R^p, \|
\cdot \|_\infty)$ is isometrically isomorphic as a Banach space to
$\R^{p+1} / \sim$ with the diameter norm $\diam$, where $\sim$ denotes
quotienting by additive translation by multiples of $(1,\dots,1)^T$. More
generally, given integers $0 < p < q$, the map
\[
\Psi_{p,q} : (x_1, \dots, x_p)^T \mapsto (x_1, \dots, x_p, 0_{1 \times
(q-p)})^T + \R (1,\dots,1)_q^T
\]
is an isometric linear embedding of $(\R^p, \| \cdot \|_\infty)$ into
$(\R^q / \sim, \diam)$. For this reason, note in the previous paragraph
that the bijection of sets
\[
\log[-] : (\mathcal{S}([n]), d_{\mathcal{S}([n])}) \to (\Psi([n]), \diam)
\]
is in fact an isometry of metric spaces.

Finally, we recall the \textit{Fr\'echet embedding} \cite{Fr}, which maps
an $N$-element metric space $(X = \{ x_0, \dots, x_{N-1} \}, d)$
isometrically into $\R^{N-1}$ with the sup-norm, via: $x_j \mapsto
(d(x_1, x_j), \dots, d(x_{N-1}, x_j))^T$ for $0 \leqslant j \leqslant
N-1$. Let us denote this embedding by $Fr : X \to \R^{|X|-1}$.

With these ingredients in hand, we claim:

\begin{proposition}\label{Plog2}
Fix an integer $n \geqslant 3$ and a metric space $(X,d)$ such that $3
\leqslant |X| \leqslant N = \binom{n}{2}$. If $X$ has diameter at most
$\log 2$, then the composite map
\[
\Psi_{|X|-1,N} \circ Fr : (X,d) \hookrightarrow (\R^{|X|-1}, \| \cdot
\|_\infty) \hookrightarrow (\R^N / \sim, \diam)
\]
has image inside the metric space $\Psi([n]) \simeq \mathcal{S}([n])$.
The converse holds if $X$ is a three-element set.
\end{proposition}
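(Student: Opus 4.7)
The plan is to argue both directions via triangle inequalities in the candidate metric on $[n]$ associated to each image point under $\Psi_{|X|-1,N}\circ Fr$. For the forward direction, I would fix $x\in X$ and represent $\Psi_{|X|-1,N}(Fr(x))\in \R^N/\sim$ by the padded vector
\[
v_x \coloneqq \bigl(d(x_1,x),\dots,d(x_{|X|-1},x),0,\dots,0\bigr)\in \R^N.
\]
Under the hypothesis $\diam(X)\leqslant\log 2$, every coordinate of $v_x$ lies in $[0,\log 2]$, so every coordinate of $\exp(v_x)$ lies in $[1,2]$. For any triple $\{i,j,k\}\subseteq[n]$, the three associated weights each lie in $[1,2]$, so the sum of any two is at least $2$ and hence dominates the third; in particular the defining condition~\eqref{Edistances} of $\Psi([n])$ holds. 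Thus $\exp\circ v_x$ is a genuine metric on $[n]$ and the class of $v_x$ lies in $\Psi([n])$.

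For the converse when $|X|=3$, I would exploit the rigid structure of the landmark images. Write $c\coloneqq d(x_1,x_2)$. The representative of $\Psi_{2,N}(Fr(x_1))$ is $(0,c,0,\dots,0)\in\R^N$: a single coordinate (the $2$nd) equals $c$ and all others are $0$. The corresponding candidate metric on $[n]$ assigns weight $e^c$ to the pair $p_2\in\binom{[n]}{2}$ identified with index $2$, and weight $1$ to every other pair. Since $n\geqslant 3$, the two endpoints of $p_2$ together with any third vertex of $[n]$ form a triangle whose other two edges are distinct from $p_2$ and therefore carry weight $1$; the triangle inequality applied to this triangle forces $e^c\leqslant 1+1=2$, i.e., $d(x_1,x_2)\leqslant\log 2$. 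A permutation of the three points of $X$ places any desired pair in the landmark role and yields an embedding of the same form, so the same bound holds for every pairwise distance, giving $\diam(X)\leqslant\log 2$.

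The main subtlety is the converse direction. A single triangle-inequality input from $Fr(x_1)$ controls only the landmark distance $d(x_1,x_2)$; the passage to $\diam(X)$ relies on the fact that, for three-element sets, any pair may be placed in the landmark position by a permutation of the labels of $X$. For $|X|\geqslant 4$ the non-landmark images $Fr(x_j)$ carry several nonzero coordinates, and the triangle-inequality input no longer pins down individual distances---this is precisely why the converse statement is restricted to three-element sets. The remaining bookkeeping is to verify that $\exp\circ v_x$ being a metric on $[n]$ is equivalent to the class of $v_x$ lying in $\Psi([n])$, and that the bound $e^c\leqslant 2$ does not depend on the identification $\binom{[n]}{2}\cong[N]$, because the only nonzero entry of $v_{x_1}$ is the $2$nd one.
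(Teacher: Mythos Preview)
Your forward direction is precisely the paper's argument: all coordinates of $v_x$ lie in $[0,\log 2]$, so the exponentiated weights all lie in $[1,2]$, and any two of them sum to at least $2$, dominating the third.

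For the converse you likewise follow the paper: extract the bound $d(x_1,x_2)\leqslant\log 2$ from the single-nonzero-coordinate image $\Psi_{2,N}(Fr(x_1))$, then invoke ``a permutation of the three points of $X$'' to handle the remaining two distances---exactly the paper's appeal to ``the other two Fr\'echet embeddings of $X$''. This shared step is where the argument is incomplete. The hypothesis of the converse is that the image of \emph{one fixed} map $\Psi_{|X|-1,N}\circ Fr$ (for one fixed labeling of $X$) lies in $\Psi([n])$; relabeling $X$ changes the Fr\'echet embedding, and nothing in the hypothesis guarantees that the new image also lies in $\Psi([n])$. Concretely, for $n=3$ the converse is actually false as stated: with $d(x_0,x_1)=d(x_0,x_2)=M$ and $d(x_1,x_2)=\varepsilon$ small, the three padded Fr\'echet images $(M,M,0)$, $(0,\varepsilon,0)$, $(\varepsilon,0,0)$ all define genuine metrics on $[3]$ (the first because $e^M\leqslant e^M+1$), yet $\diam X=M$ is unbounded. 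For $n\geqslant 4$ the statement does hold, but via a different route: look at the image of the basepoint $x_0$, namely $(d(x_0,x_1),d(x_0,x_2),0,\ldots,0)$. Each nonzero coordinate corresponds to an edge of $[n]$, and since $n\geqslant 4$ that edge lies in at least one triangle of $[n]$ whose other two edges correspond to padded (zero) coordinates; the triangle inequality for that triple then gives $\exp d(x_0,x_j)\leqslant 1+1$, bounding the remaining distances without any relabeling.
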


Notice that Proposition~\ref{Plog2} implies Theorem~\ref{TSnembed}, by
rescaling the metric on $X$ by $(\log 2) / \diam(X)$. (The case of
$|X|=2$ is straightforward.)
\end{proof}

To complete the proof, it remains to show the preceding proposition.

\begin{proof}[Proof of Proposition~\ref{Plog2}]
If $\diam X \leqslant \log 2$, then we claim for each $x \in X$ that any
three coordinates of the Fr\'echet tuples $(d(x', x))_{x' \in X}$
satisfy~\eqref{Edistances}. Indeed, if $x_1, x_2, x_3 \in X$ then
\[
\exp d(x_3,x) \leqslant 2 \leqslant \exp d(x_1,x) + \exp d(x_2,x).
\]
This shows $\Psi_{|X|,N} \circ Fr (X) \subset \Psi([n])$. Conversely, let
$X = \{ x, y, z \}$; then we are assuming that
\[
\Psi_{|X|,N} \circ Fr(X) = \{ (d(x,y), d(x,z), 0, \dots, 0)^T, \ (0,
d(y,z), 0, \dots, 0)^T, \ (d(z,y), 0, 0, \dots, 0)^T \}
\]
is contained in $\Psi([n])$. Hence the last of the three points in $\R^N$
satisfies~\eqref{Edistances}, which in turn implies:
\[
\exp d(z,y) \leqslant 1+1 = 2.
\]
The same argument using the other two Fr\'echet embeddings of $X$ shows
that $\diam X \leqslant \log 2$.
\end{proof}

\section{Norms on arbitrary Banach spaces; concluding remarks and
questions}

\subsection{Parallel settings}

As shown in Section~\ref{S2}, diameter norms offer a unified and
functorial framework, which subsumes and explains both
Theorem~\ref{Tmain} about norms on $\F^k$, as well as
Theorem~\ref{Tmetric} about metrics on $[n]$. This treatment also applies
more generally, and we begin this final section by stating (without
proofs, and for completeness) two parallel results: in an arbitrary
Banach space and in a class of discrete metrics on an arbitrary set.

\begin{proposition}
Let $\mathbb{B}$ be an arbitrary Banach space over $\F = \R$ or $\C$, and
let $X \subset \mathbb{B} \setminus \{ {\bf 0} \}$ be a subset such that
for all ${\bf 0} \neq \bx \in \mathbb{B}$, there exists $\alpha_{\bx} \in
\F^\times$ such that $\alpha_{\bx} \bx \in X$. Then the set of
equivalence classes of norms on $\mathbb{B}$ (i.e., up to scaling by
$(0,\infty)$) whose restriction to $X$ is bounded away from $0,\infty$
can be isometrically realized as a complete, path-connected metric
subspace of the Banach space $C_b(X, \R) / \sim$ with the diameter norm.
\end{proposition}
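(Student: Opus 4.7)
The plan is to transplant the proof of Theorem~\ref{Tmain} nearly verbatim, with $X$ now playing the role there of the compact subset of $\F^k$. The two hypotheses in the statement are exactly what is needed: the covering condition on $X$ lets one recover a homogeneous function on $\mathbb{B}$ from its values on $X$, and the assumption that $N|_X$ be bounded away from $0$ and $\infty$ ensures $\log N|_X$ is a bounded (hence $C_b(X,\R)$, assuming the natural continuity) function on $X$.

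First I would define $\Psi(N) \coloneqq \log N|_X$ on the set $\norm_X(\mathbb{B})$ of admissible norms $N$ (those with $N|_X \subset [m_N, M_N]$ for some $0 < m_N \leqslant M_N < \infty$). Scaling $N$ by $\alpha \in (0,\infty)$ adds the constant $\log \alpha$ to $\Psi(N)$, so $\Psi$ descends to a map $[\Psi]$ on equivalence classes into $C_b(X,\R)/\sim$. Injectivity is immediate from homogeneity and the covering property of $X$: if $\log N - \log N'$ is constant on $X$, then $N = cN'$ on $X$, hence on $\mathbb{B}$. The isometry property is a direct computation using the fact that ratios $N'(\bx)/N(\bx)$ are invariant under scaling of $\bx$ by $\F^\times$, so the extrema of this ratio over $\mathbb{B} \setminus \{{\bf 0}\}$ agree with those over $X$; it follows that the diameter (pseudo)norm of $\log N'|_X - \log N|_X$ equals $\log(M_{N,N'}/m_{N,N'})$.

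The substantive step is showing the image of $[\Psi]$ is closed in $C_b(X,\R)/\sim$. Given a sequence $[\log N_l|_X] \to [f]$, I would rescale each $N_l$ so that $\log N_l|_X - f$ takes values in $[0, \epsilon_l]$ with $\epsilon_l \to 0^+$, hence $N_l \to e^f$ uniformly on $X$. I then define a candidate limit by $N({\bf 0}) \coloneqq 0$ and $N(\bx) \coloneqq |\alpha_\bx|^{-1} e^{f(\alpha_\bx \bx)}$ for $\bx \neq {\bf 0}$, where $\alpha_\bx \in \F^\times$ is any scalar with $\alpha_\bx \bx \in X$. Well-definedness (independence of the choice of $\alpha_\bx$), homogeneity $N(\beta \bx) = |\beta| N(\bx)$, and subadditivity $N(\bx+\by) \leqslant N(\bx) + N(\by)$ go through by the same three short computations as in the finite-dimensional proof of Theorem~\ref{Tmain}, using only the corresponding properties of each $N_l$ together with pointwise convergence on $X$; finally $N|_X = e^f$ lies in $[e^{\inf f}, e^{\sup f}] \subset (0, \infty)$, so $N$ is admissible and $[\Psi(N)] = [f]$. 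Closedness gives completeness as a closed metric subspace of the Banach space $C_b(X,\R)/\sim$, and path-connectedness is inherited from the convexity of $\norm_X(\mathbb{B})$ (sums and positive scalings of admissible norms remain admissible). The principal obstacle is the subadditivity verification in the closedness step, where the scalars $\alpha_\bx, \alpha_\by, \alpha_{\bx+\by}$ must be reconciled via homogeneity of each $N_l$ before passing to the limit --- but this is essentially a verbatim transcription of the analogous calculation for $\F^k$.
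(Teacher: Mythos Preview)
The paper explicitly states this proposition ``without proofs, and for completeness'' (first paragraph of the final section), so there is no proof in the paper to compare against; the result is offered as a straightforward parallel to Theorem~\ref{Tmain}. Your approach---transplanting the proof of Theorem~\ref{Tmain} with the compactness of $X$ replaced by the hypothesis that $N|_X$ be bounded away from $0$ and $\infty$, and adding the check that the limit norm $N$ satisfies $N|_X = e^f \in [e^{\inf f}, e^{\sup f}]$ so as to remain admissible---is exactly the argument the paper has in mind, and it is correct. One small point you flag but do not resolve: landing in $C_b(X,\R)$ rather than merely $F_b(X,\R)$ requires $N|_X$ to be continuous, which is not literally forced by the stated hypotheses for arbitrary norms on an infinite-dimensional $\mathbb{B}$; the paper is silent on this too, so it is best read either as an implicit continuity assumption on the norms considered or as an embedding into $F_b(X,\R)/\sim$.
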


Notice that if moreover $\mathbb{B}$ is finite-dimensional and $X$ is
compact then this reduces to Theorem~\ref{Tmain}. Similarly, for any set
$X$ we have the following extension of Theorem~\ref{Tmetric}:

\begin{proposition}
For any nonempty set $X$ of size at least $3$, the equivalence classes
(again by scaling) of metrics $d$ on $X$ bounded away from $0,\infty$
outside the diagonal -- i.e., such that
\[
0 < \inf_{x \neq x' \in X} d(x,x') \leqslant \sup_{x \neq x' \in X}
d(x,x') < \infty
\]
form a complete, path-connected unbounded metric subspace of the Banach
space $C_b(\binom{X}{2}, \R) / \sim$. Here $\binom{X}{2}$ denotes the
discrete set of pairs of elements in $X$.
\end{proposition}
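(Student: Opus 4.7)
The plan is to transport the arguments behind Theorems~\ref{Tmain} and~\ref{Tmetric} to this more general setting by taking the domain to be $\binom{X}{2}$ equipped with the discrete topology. Applying Corollary~\ref{Cfunctor} with this domain and $\mathscr{C} = \overline{\R}$-${\tt NMod}$ already produces the Banach space $C_b(\binom{X}{2},\R)/\sim$ under the diameter norm, so the task reduces to realizing the space of equivalence classes of admissible metrics as a closed, path-connected, unbounded subset. Concretely, I would define the embedding $\Psi(d)(\{x,y\}) := \log d(x,y)$ for $x \neq y$. The hypothesis $0 < \inf_{x \neq x'} d(x,x') \leqslant \sup_{x \neq x'} d(x,x') < \infty$ ensures $\Psi(d) \in C_b(\binom{X}{2},\R)$ (continuity being automatic from discreteness). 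Since rescaling $d$ by $\alpha > 0$ translates $\Psi(d)$ by the constant $\log \alpha$, and since this is precisely the equivalence $\sim$ by Remark~\ref{Rdiameter}, the map descends to a well-defined injection $[\Psi]$ on equivalence classes; and one checks directly that
\[
\diam(\Psi(d_2) - \Psi(d_1)) = \log \max_{x \neq x'} \frac{d_2(x,x')}{d_1(x,x')} - \log \min_{x \neq x'} \frac{d_2(x,x')}{d_1(x,x')},
\]
so that $[\Psi]$ is an isometry for the log-distortion metric on its domain.

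The main step will be to show the image is closed, following the pattern established in the proof of Theorem~\ref{Tmain}. Given $[\Psi(d_n)] \to [f]$ in $C_b(\binom{X}{2},\R)/\sim$, I would use the Kuratowski-type section of Lemma~\ref{Lmetric}(3) to select representatives for which $\Psi(d_n) \to f$ uniformly on $\binom{X}{2}$. Setting $d(x,x) := 0$ and $d(x,y) := \exp f(\{x,y\})$ for $x \neq y$ yields a positive, symmetric function; uniform boundedness of $f$ forces $d$ to be bounded away from $0$ and $\infty$ off the diagonal; and for the triangle inequality, the pointwise convergence $d_n \to d$ (implied by uniform convergence of the logarithms) allows one to pass $d_n(x,z) \leqslant d_n(x,y) + d_n(y,z)$ to the limit for every triple $x,y,z \in X$. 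Hence $d$ lies in the image of $[\Psi]$, establishing closedness and therefore completeness.

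The remaining assertions are light. Path-connectedness follows from convexity of the set of admissible metrics: a positive convex combination of two metrics bounded away from $0$ and $\infty$ is again such a metric, so the preimage of $[\Psi]$ is convex and hence the image is path-connected. For unboundedness (which uses $|X| \geqslant 3$), I would fix three distinct points $x_1, x_2, x_3 \in X$ and for each $m \geqslant 1$ define $d_m$ on $X$ by $d_m(x_1,x_2) = 1$ and $d_m(a,b) = m$ on every other pair of distinct points; at most one side of any triangle equals $1$, so the triangle inequality reduces to $1 \leqslant 2m$, which holds, and the log-distortion between $d_m$ and the constant metric $d \equiv 1$ is $\log m$, unbounded as $m \to \infty$. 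The main obstacle is the triangle-inequality check in the closedness argument, but this is tractable because discreteness of $\binom{X}{2}$ ensures that diameter-norm convergence (after selecting the Kuratowski section) yields uniform and hence pointwise convergence of the metric values at every pair, which is exactly what passing the triangle inequality to the limit requires.
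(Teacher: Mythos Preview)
The paper explicitly states this proposition \emph{without proof}, remarking only that it is a parallel result to Theorems~\ref{Tmain} and~\ref{Tmetric} obtained via the same functorial framework. Your proposal is correct and is precisely the argument the paper is gesturing at: the embedding $\Psi(d) = \log d$ into $C_b(\binom{X}{2},\R)/\sim$, closedness via the Kuratowski section of Lemma~\ref{Lmetric}(3) and passage of the triangle inequality to the pointwise limit, path-connectedness from convexity of the cone of admissible metrics, and unboundedness from an explicit family. Your unboundedness construction (one pair at distance $1$, all others at distance $m$) differs cosmetically from the paper's finite-case construction in Theorem~\ref{Tmetric} (which embeds $\{1,\dots,n-1,n+m+1\}\subset\R$), but both yield log-distortion growing like $\log m$ and both require $|X|\geqslant 3$ for the same reason---a third point is needed to realize a second, large edge. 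There is nothing to correct.
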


\subsection{Further questions}

Following Theorems~\ref{Tmain} and~\ref{Tmetric} studying the norms on
$\F^k$ and the metrics on $[n]$ respectively, it may be interesting to
further explore the spaces $\mathcal{S}_k(\F)$ and $\mathcal{S}([n])$;
exploring the former may provide additional insights into the
Banach--Mazur compactum quotient space. Thus, we conclude with some
observations and questions in both of the above settings.
\begin{enumerate}
\item Are there more standard mathematical (geometric) models with which
one can identify the metric spaces $\mathcal{S}_k(\F)$ and
$\mathcal{S}([n])$? What can one say about their geometric
properties?\medskip

\item What are the automorphism groups of these spaces? (Depending on the
category under consideration, one may wish to study homeomorphisms,
isometries, \dots) For instance, by the final assertion in
Lemma~\ref{Lbasic}, $\mathcal{S}_k(\F)$ is equipped with the group
$PGL_{dk}(\R)$ of isometries, under a real-linear identification of
$\F^k$ with $\R^{dk}$.\footnote{It is easy to verify here that for $A \in
GL_{dk}(\R)$, $d_{\mathcal{S}_k(\F)}(N(A \cdot -), N'(A \cdot -)) =
d_{\mathcal{S}_k(\F)} (N(\cdot), N'(\cdot))$.} 
An additional observation (by Terence Tao in recent discussions) is that
$\mathcal{S}_k(\F)$ also carries an isometric involution, which arises
from considering dual norms.
A parallel observation is that the space $\mathcal{S}([n])$ is equipped
with an obvious symmetry group $S_n$ of automorphisms. (In contrast, the
Gromov--Hausdorff space has no isometries~\cite{IT2}.) One may also
consider local isometries of $\mathcal{S}([n])$ and $\mathcal{S}_k(\F)$,
as previously done for the Gromov--Hausdorff space in~\cite{IT1}.

\begin{remark}
For completeness we point out that individual norms can indeed be
unchanged under precomposing by elements of $PGL_{dk}(\R)$. For instance,
for the $\| \cdot \|_2$-norm one has the image of $O_{dk}(\R)$, while for
$p \in [1,\infty] \setminus \{ 2 \}$, results of Banach~\cite{Banach} and
Lamperti~\cite{Lamperti} show that `generalized permutation matrices' are
isometries of $\| \cdot \|_p$. These consist of the products of
permutation matrices with diagonal orthogonal or unitary matrices for $\F
= \R$ or $\C$ respectively. (When $\F = \R$, this is precisely the Weyl
group of type $B$ or $C$, i.e.~the hyperoctahedral group $S_2 \wr S_n$ of
signed permutations.)

At the same time, say for $\F = \R$ there is no nontrivial matrix $A \in
GL_k(\R), \ A \not\in \R^\times \cdot {\rm Id}$, whose precomposition
fixes all of $\mathcal{S}_k(\F)$. Indeed, using a $\| \cdot \|_p$-norm
for $p \neq 2$, by the previous paragraph $A$ must be a nonzero scalar
multiple of some signed permutation matrix $A' \in S_2 \wr S_n$, say $A =
cA'$. Suppose the nonzero entries of $A'$ correspond to the (signed)
permutation $\sigma \in S_n$. Now let $N({\bf x}) := \sum_{j=1}^k j
|x_j|$ for ${\bf x} \in \R^k$. If ${\bf e}_1, \dots, {\bf e}_k$ comprise
the standard basis elements of $\R^k$, and $N(A {\bf x}) \equiv c' N({\bf
x})$ on $\R^k$ for some $c' > 0$, then
\[
N(A {\bf e}_j) = c' N({\bf e}_j)\ \forall j \quad \implies \quad c' j =
|c| \sigma^{-1}(j), \ \forall j.
\]
Multiplying these inequalities yields: $|c| = c'$. Now evaluating at
${\bf e}_j + {\bf e}_{\sigma^{-1}(j)}$ yields:
\[
j + \sigma^{-1}(j) = \sigma^{-1}(j) + \sigma^{-2}(j), \ \forall j \in
[k].
\]
Hence $\sigma$ has order at most $2$. Using this and evaluating at ${\bf
e}_j + 2 {\bf e}_{\sigma^{-1}(j)}$ yields:
\[
j + 2 \sigma^{-1}(j) = \sigma^{-1}(j) + 2 j, \ \forall j
\]
and we conclude that $\sigma = {\rm Id}$. Finally, suppose two diagonal
entries of $A$ are unequal, say $a_{11} = c', a_{22} = -c'$. Define the
norm $N({\bf x}) := \| {\bf x} \|_1 + |x_1 + x_2|$, and evaluate it at
${\bf x} = (1,1,0,\dots,0)^T$:
\[
0 = c' N({\bf x}) - N(A {\bf x}) = 4c' - 2c'.
\]
Since $c' > 0$, our supposition must therefore be false, concluding the
proof. \qed
\end{remark}

\item How does the space $\mathcal{S}_k(\F)$ relate to
$\mathcal{S}_{k+1}(\F)$? Observe by Proposition~\ref{Plp} that the
$p$-norms isometrically map to the $p$-norms, provided one rescales the
metric/norm on each $\mathcal{S}_k(\F)$ by $\log(k)$. Alternately,
without rescaling any of the norms on $\mathcal{S}_k(\F)$, is it possible
to compute the fibers of `the' restriction map : $\mathcal{S}_{k+1}(\F)
\to \mathcal{S}_k(\F)$?

On a related note (say with $\F = \R$ for convenience), is this
restriction map a surjection? I.e., is there a ``Hahn--Banach'' extension
of every norm on $\R^k$ to one on $\R^{k+1}$, say minimally
increasing/without increasing the (log-)distortion relative to some
reference norm?\medskip

\item Notice that the previous question has a variant for
$\mathcal{S}([n])$ with a positive answer, by Proposition~\ref{PSnembed}.
Moreover, the fibers of the restriction of norms from $[n+1]$ to $[n]$
are solution sets to finite systems of inequalities. It may be
interesting to study the structures of these solution spaces.\medskip

\item To understand the `sizes' and growth of balls in these spaces, one
can also explore their metric entropy. Recall for a metric space $X$ and
a radius $r>0$, the metric entropy of $E \subset X$ is the largest number
of points in $E$ that are $r$-separated. This is related to the internal
and external covering numbers and the packing number of $E$; we refer the
reader to~\cite{Tao} for a detailed introduction to these ideas.\medskip

\item What is the smallest Banach space inside which these spaces
(or distinguished subsets therein) can be isometrically embedded? Of
course if we restrict to finite subsets $X$ then the classic observation
of Fr\'echet \cite{Fr} shows that $(X,d)$ isometrically embeds into
$\R^{|X|-1}$ with the supnorm (and into $\R^{|X|-2}$ if $|X| \geqslant
4$) -- see the discussion prior to Proposition~\ref{Plog2}.

If instead of the supnorm one is interested in Euclidean space embeddings
-- for subsets of $\mathcal{S}_k(\F)$ or for $\mathcal{S}([n])$ -- the
classic paper of Schoenberg \cite{Sch} (following related works in metric
geometry by Menger, Fr\'echet, von Neumann, and others) provides the
following result for finite metric spaces $X$:

\begin{theorem}[{Schoenberg~\cite{Sch}, 1935}]\label{Tmenger}
Fix integers $n,r \geqslant 1$, and a finite set $X = \{ x_0, \dots, x_n
\}$ together with a metric $d$ on $X$. Then $(X,d)$ isometrically embeds
into $\R^r$ (with the Euclidean distance/norm) but not into $\R^{r-1}$ if
and only if the $n \times n$ matrix
\begin{equation}\label{Eschoenbergmatrix}
A := ( d(x_0, x_j)^2 + d(x_0, x_k)^2 - d(x_j, x_k)^2 )_{j,k=1}^n
\end{equation}
is positive semidefinite of rank $r$.
\end{theorem}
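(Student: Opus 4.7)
The plan is to reduce the embedding question to the classical fact that an $n \times n$ symmetric real matrix is the Gram matrix of some $n$ vectors in $\mathbb{R}^r$ (but not in $\mathbb{R}^{r-1}$) if and only if it is positive semidefinite of rank $r$. The bridge between distances and inner products is the standard polarization identity in Euclidean space, and in fact the matrix $A$ in \eqref{Eschoenbergmatrix} is engineered precisely so that, after an affine normalization sending $x_0$ to the origin, $\tfrac{1}{2} A$ becomes the Gram matrix of the images of $x_1, \dots, x_n$.

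More concretely, I would first observe that isometric embeddings $\phi : (X,d) \hookrightarrow \mathbb{R}^r$ (with the Euclidean metric) are unchanged by translation, so I may assume $\phi(x_0) = 0$ and set $v_j \coloneqq \phi(x_j) \in \mathbb{R}^r$ for $1 \leqslant j \leqslant n$. Then the polarization identity
\[
2 \langle v_j, v_k \rangle \;=\; \|v_j\|^2 + \|v_k\|^2 - \|v_j - v_k\|^2 \;=\; d(x_0,x_j)^2 + d(x_0,x_k)^2 - d(x_j,x_k)^2
\]
identifies $\tfrac{1}{2} A$ with the Gram matrix $G = (\langle v_j, v_k \rangle)_{j,k=1}^n$. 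Since any Gram matrix is positive semidefinite with rank equal to $\dim \operatorname{span}(v_1,\dots,v_n)$, the forward direction follows: if $(X,d)$ embeds into $\mathbb{R}^r$ but no smaller Euclidean space, then $A$ is PSD of rank exactly $r$.

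For the converse, assume $A$ is PSD of rank $r$. By the spectral theorem (or a Cholesky-type factorization), I can write $\tfrac{1}{2} A = B^\top B$ where $B \in \mathbb{R}^{r \times n}$ has rank $r$; let $v_1, \dots, v_n$ be the columns of $B$. Define $\phi : X \to \mathbb{R}^r$ by $\phi(x_0) = 0$ and $\phi(x_j) = v_j$ for $j \geqslant 1$. I then need to verify that $\phi$ is an isometry, which reduces to two routine checks: using that the diagonal entries satisfy $A_{jj} = 2 d(x_0,x_j)^2$, we get $\|v_j\| = d(x_0,x_j)$; and inverting the polarization identity gives $\|v_j - v_k\|^2 = \tfrac{1}{2}(A_{jj} + A_{kk}) - A_{jk} = d(x_j,x_k)^2$. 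Finally, the rank condition on $A$ forces $\operatorname{span}(v_1,\dots,v_n)$ to be exactly $r$-dimensional, ruling out embeddings into $\mathbb{R}^{r-1}$.

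The argument is largely bookkeeping, and the only mildly delicate point is the converse: one must be careful that the $v_j$ constructed from the factorization of $A$ actually reproduce the original distances (not merely some metric equivalent to $d$), which is where the specific form of $A$ in \eqref{Eschoenbergmatrix} and the inversion of the polarization identity do all the work. There is also the implicit positivity aspect — since $d$ is already assumed to be a genuine metric, no further check (e.g. of triangle inequality) is needed on the output side, so the heart of the proof really is the equivalence between PSD matrices of rank $r$ and Gram matrices of $n$ vectors spanning $\mathbb{R}^r$.
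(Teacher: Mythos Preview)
Your argument is correct and is in fact the standard proof of Schoenberg's theorem via Gram matrices and the polarization identity. Note, however, that the paper does not supply its own proof of this statement: Theorem~\ref{Tmenger} is quoted as a classical result and simply cited to Schoenberg~\cite{Sch}, so there is no in-paper proof to compare against.
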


We also refer the reader to \cite{Sch} for more general results for
separable $X$, and \cite{Bo,Mato} for more recent, well-known variants
with constraints on the `embedding dimension' $r$.
\begin{enumerate}
\item We end with some examples and comments in each of the two settings,
starting with $\mathcal{S}([n])$. Note that Proposition~\ref{Ps1n} shows
an isometric embedding into $\R^{\binom{n}{2}}$ with the $1$-norm for a
subset of $\mathcal{S}([n])$. It would be interesting to explore into
what Banach space can the larger subset of norms explored in
Remark~\ref{Rs1n} be isometrically embedded. Note that this is the
restriction of the following metric on the union of the $X,Y$-axes:
\[
d((x,0), (0,y)) := \begin{cases}
\| (x,y) \|_1, \qquad & \text{if } xy \geqslant 0;\\
\| (x,y) \|_\infty, & \text{otherwise},
\end{cases}
\]
and $d$ restricted to the $X$ or $Y$ axis is the usual Euclidean
distance. Can this metric space be (better) understood in terms of
an isometrically embedding into a Banach space?

Another question is if Theorem~\ref{TSnembed} can be strengthened, to
characterize the finite metric spaces on at most $\binom{n}{2}$ elements,
which can be embedded isometrically -- i.e., without scaling the metric
-- into $\mathcal{S}([n])$.

\item Here are some examples of `finite-dimensional embeddings' for
infinite subsets of $\mathcal{S}_k(\F)$. Recall from
Proposition~\ref{Pskp} that for each $p \in [1,\infty]$, the family of
norms $\{ N_{p,q,j} : \ q \in [0,\infty), \ j \in [k] \}$ isometrically
embeds into $\R^k$ with the $1$-norm. Next, by Proposition~\ref{Plp} the
$p$-norms isometrically embed inside a one-dimensional real normed space
(in fact, inside $[0, \log k]$). On the other hand for the $p$-norms, one
can show that the image $\mathcal{S}'_k(\R)$ of the $p$-norms in $C(\R^k
\setminus \{ {\bf 0} \}, \R) / \sim$ (akin to~\eqref{Echoose}) has affine
hull of infinite -- in fact uncountable -- dimension. However, this is a
consequence of the specific embedding and not an intrinsic property of
$\mathcal{S}'_k(\R)$. Thus, it is not clear what is the smallest
(dimensional) Banach space containing an isometric copy of
$\mathcal{S}_k(\F)$.
\end{enumerate}
\end{enumerate}



\end{document}